\documentclass[11pt,a4paper]{article}
\usepackage{hyperref}
\usepackage[pdftex]{graphicx}
\usepackage{amssymb,amsthm,amsmath,hyperref,txfonts}

\usepackage{mathtools}

\usepackage[dvips]{color}
\usepackage{graphicx,float,xcolor}
\setlength{\hoffset}{-18pt}
\setlength{\oddsidemargin}{15pt}  
\setlength{\evensidemargin}{15pt}  
\setlength{\marginparwidth}{50pt}  
\setlength{\textwidth}{481pt}  
\setlength{\voffset}{-18pt}  
\setlength{\marginparsep}{7pt}  
\setlength{\topmargin}{0pt}  
\setlength{\headheight}{15pt}  
\setlength{\headsep}{10pt}  
\setlength{\footskip}{27pt}  
\setlength{\textheight}{690pt}  

\usepackage{epstopdf}
\DeclareGraphicsRule{.tif}{png}{.png}{`convert #1 `dirname #1`/`basename #1 .tif`.png}

\usepackage{tikz}
\usetikzlibrary{positioning,decorations,arrows,patterns,shapes,backgrounds}



\newtheorem{theorem}{Theorem}[section]

\newtheorem{definition}{Definition}[section]
\newtheorem{lemma}[theorem]{Lemma}


\newcommand{\RR}{\mathbb{R}}

\newcommand{\Dt}{\dfrac{d}{dt}}

\newcommand{\intO}{\int_{\Omega}}

\DeclareMathOperator{\curl}{curl}

\DeclareMathOperator{\divv}{div}
\newcommand{\Ss}{\mathbf{S}}

\newcommand{\uu}{\mathbf{u}}

\DeclareMathOperator{\D}{{\mathbb D}}

\newcommand{\del}{\partial}

\def\XXint#1#2#3{{\setbox0=\hbox{$#1{#2#3}{%
\int}$ }
\vcenter{\hbox{$#2#3$ }}\kern-.6\wd0}}

\def\edc{\end{document}}
\numberwithin{equation}{section}

\date{\today}

\allowdisplaybreaks

\oddsidemargin    0.08in
\evensidemargin   0.08in
\marginparwidth   0.00in
\marginparsep     0.00in
\textwidth        15.5cm
\textheight       23.5cm

\begin{document}
\title{\bf Blow-up criterion for a three-dimensional compressible non-Newtonian fluid with vacuum
}

\author{Guo Junyuan\thanks{E-mail: 202220772@stumail.nwu.edu.cn}, Fang Li$^a$\thanks{E-mail: fangli@nwu.edu.cn}\thanks{Corresponding author} \\
\textit{\small Department of Mathematics and CNS, Northwest University, Xi'an, P. R. China}
}
\date{}
\maketitle
\begin{abstract}
		This work is devoted to establish an improved blow-up criterion for strong solutions to a three-dimensional compressible non-Newtonian fluid with vacuum. The considered system is the Power Law model in a bounded periodic domain in ${\mathbf R}^3$. We establish a blow-up criterion for the local strong solutions in terms of the $L^4(0,T;L^\infty(\Omega))-$norm of the gradient of the velocity for any power-law index $q>1$. 
\vspace{4mm}\\
{\textbf{Keywords:} Blow-up criterion, Strong solutions, Compressible non-Newtonian fluid, Power Law model}\\
{\textbf{AMS Subject Classification (2020):} 35Q330, 76N10, 35B65}
\end{abstract}

\section{Introduction}
In this paper, we analyze a initial-boundary value problem for the Navier-Stokes equations describing the flow of a compressible non-Newtonian fluid that reads
\begin{equation}\label{1dlg-E2}
	\begin{cases}
		\partial_{t}\rho+\operatorname{div}(\rho \uu)=0\quad in~\Omega_{T},\\
		\partial_{t}(\rho \uu)+\operatorname{div}(\rho \uu\otimes \uu)-\operatorname{div}\Ss+\nabla p=0\quad in~\Omega_{T}.
	\end{cases}
\end{equation}
Here $\Omega\subset\mathbf{R}^{3}$ is the domain occupied by the fluid, $T>0$ is the time of evolution and $Q_{T}=(0,T)\times \Omega$. We denote by $u(t,x)$ the fluid velocity, by $\rho(t,x)$ the fluid density, by $p$ is the pressure which is assumed here to be a given function of $\rho$, i.e. $p=a\rho^\gamma,$ with $a>0$ and $\gamma>1$, and by $\Ss$ the viscous stress. We assume the periodic boundary condition is a torus, that is, $$ \Omega={\mathbb T}^{3}.$$

We restrict ourselves to the constitutive relation
\begin{equation}\label{S}
	\Ss=2\mu{\mathbb D}{\mathbf u}+\lambda\operatorname{div}\uu~\mathbb{I}+\tau^{*}(|{\mathbb D}{\mathbf u}|^{2}+\delta^{2})^{\frac{q-2}{2}}{\mathbb D}{\mathbf u},
\end{equation}
with $q$ is a number belongs to $(1,\infty)$, $\tau^*\geqslant 0$ is the yield stress also assumed to be a constant, shear rate ${\mathbb D}{\mathbf u} =\frac{1}{2}(\nabla \uu+\nabla^t \uu)$, and $|{\mathbb D}{\mathbf u}|^2$ is the Hilbert-Schmidt norm defined by (also called Frobenius norm)
$$|{\mathbb D}{\mathbf u}|^2=\underset{i,j=1}{\overset{3}{\sum}}|{\mathbb D}_{ij}{\mathbf u}|^2$$
with ${\mathbb D}{\mathbf u} =({\mathbb D}_{ij}{\mathbf u})_{1\leqslant i,j\leqslant 3}$ and ${\mathbb D}_{ij}{\mathbf u}=\frac{1}{2}(\partial_i\uu_j+\partial_j\uu_i).$ The coefficients $\mu$ and $\lambda$ are the so-called Lam\'e viscosity coefficients assumed here constants, while they satisfy the following physical restrictions:
\begin{equation}\label{constant}
	\mu>0\quad2\mu+\lambda>0\quad\delta\geqslant c>0.
\end{equation}	
Consider the initial value
\begin{equation}\label{initial value}
	(\rho,\uu)(x,0)=(\rho_{0},\uu_{0})(x)\quad in\quad \Omega.
\end{equation}

The earliest work of incompressible non-Newtonian fluid is attributed to Ladyzhenskaya  \cite{Ladyzhenskaya-1969, Ladyzhenskaya-1970}.
Ladyzhenskaya gave  $\Ss=\nu_1 {\D}{\uu}+\nu_2|{\mathbb D}{\mathbf u}|^{r-2}{\D}{\uu}$  with $\nu_1\geqslant 0$ and  $\nu_2>0$ being constants and
studied the global existence of weak solutions for Dirichlet boundary conditions with the exponent $r>1+\frac{2d}{d+2}$
($d$ stands for space dimension). Inspired by \cite{Ladyzhenskaya-1969, Ladyzhenskaya-1970}, the non-Newtonian fluids has been studied intensively, and various existence and regular properties have been proved in the last years. Bellout, Bloom and Ne\v{c}as \cite{Bellout-1994} proved that there exist Young-measure-valued solutions to the incompressible non-Newtonian fluids for space periodic problems under some conditions.
Ne\v{c}asov\'{a} and Penel \cite{Necasova-2001} proved $L^2-$decay for weak solution to an incompressible non-Newtonian fluid in whole space
under some assumptions. Guo and his cooperators obtained a series results about incompressible non-Newton fluids (see the  monographs
\cite{Guo-Zhu-2002,Guo-Lin-Shang-2006}).  Bart{\l}omiej and Aneta  showed the existence of weak solutions for unsteady flow of incompressible
nonhomogeneous, heat-conducting fluids with generalised form of the stress tensor without any restriction on its upper growth
\cite{Bartlomiej-Aneta-2015}. More results on the mathematical theory of the incompressible non-Newtonian fluids, we can refer the monographs
\cite{Chhabra-2008, Malek-1997} and  papers \cite{Bellout-1994,Bohme-1987,Diening-2010,Guo-Zhu-2002,Guo-Lin-Shang-2006,Zhikov-2009,Zang-2018} and therein references.

The first mathematical analysis on the compressible non-Newtonian fluid goes back to  \cite{Necasova-1993}, where the existence of the measure-valued solutions to
the equations (\ref{1dlg-E2})-(\ref{S}) was proved. Later, Mamontov \cite{Mamontov-1999,Mamontov-1999-1} illustrated the global existence of  weak solution  under the assumptions of an exponentially growing viscosity and isothermal pressure. Recently, Abbatiello, Feireisl and Novot\'{n}y proved the existence of so-called dissipative solution in \cite{Abbatiello-2020}. For the existence of the strong solution, the local-in-time existence were established for the absence of vacuum in \cite{Kalousek-Macha-Necasova-2020}. Xu and Yuan \cite{Yuan-Xu-2008} proved the local-in-time existence and uniqueness in one space dimension with singularity and vacuum. While the initial energy is small, Yuan, Si and Feng \cite{Yuan-Si-Feng-2019} established the global well-posedness  of strong solutions for the initial boundary value problems of the one-dimensional model (\ref{1dlg-E2})-(\ref{S}).  Fang and Zang showed the global existence and uniqueness of strong solutions to the Cauchy problem for a one-dimensional compressible non-Newtonian fluid of power-law type in \cite{Fang-Zang-2023}.  	

Recently, Bilal Al Taki \cite{2023Wellposedness} studied the local existence and uniqueness of the strong solution of
system (\ref{1dlg-E2})-(\ref{S}) in three-dimensional space. It is a nature question that whether the strong solution of
system (\ref{1dlg-E2})-(\ref{S}) blows up in finite time.

It is vital to review the abundant results of blow up criteria of the compressible Navier-Stokes equations since they will guide us to hunt the blow
up criteria of compressible non-Newtonian fluid. The famous result of the blow up criterion of the compressible Navier-Stokes equations was proved by
Fan and Jiang in \cite{2008BLOW} for two dimensions and $7\mu>9\lambda$ that
$$\lim\limits_{T\rightarrow T^{*}}\left(\sup_{0\leqslant t\leqslant
	T}\|\rho\|_{L^{\infty}(\Omega)}+\int_{0}^{T}(\|\rho\|_{W^{1,q_{0}}(\Omega)}+\|\nabla\rho\|_{L^{2}(\Omega)}^{4})dt\right)=\infty.$$
Later, Huang and Xin \cite{2011Serrin} proved the following blow-up criterion to the compressible Navier-Stokes equations
$$\lim\limits_{T\rightarrow T^{*}}
\left(\|\divv \uu\|_{L^{1}(0,T;L^{\infty}(\mathbb{R}^{3}))}+\|\rho^{\frac{1}{2}}\uu\|_{L^{s}(0,T;L^{r}(\mathbb{R}^{3}))}\right)=\infty,$$                        
and
$$\lim\limits_{T\rightarrow
	T^{*}}\left(\|\rho\|_{L^{\infty}(0,T;L^{\infty}(\mathbb{R}^{3}))}+\|\rho^{\frac{1}{2}}\uu\|_{L^{s}(0,T;L^{r}(\mathbb{R}^{3}))}\right)=\infty,$$
with $\frac{2}{s}+\frac{3}{r}\leqslant 1 ~( 3<r\leqslant\infty).$
Fang, Song and Guo \cite{2013A} gave the blowup criterion of
compressible non-Newtonian fluid equations over one-dimensional bounded interval as follows
$$\lim\limits_{T\rightarrow
	T^{*}}sup\left(\|\rho\|_{L^{\infty}(0,T;H^{1}(I))}+\| \uu_{x}\|_{L^{\infty}(0,T;L^{p}(I))}\right)=\infty.$$                                          
Yuan and his cooperators proved in \cite{2014Blow} that
$$\lim\limits_{T\rightarrow T^{*}}\int_{0}^{T}\|\uu_{x}\|_{L^{\infty}(I)}dt=\infty,$$
is a blowup criterion for compressible non-Newtonian fluids in one-dimensional bounded interval. Very recently,
Bilal Al Taki and his cooperators \cite{2023remark} studied the blowup criterion for compressible non-Newtonian fluids in three dimensions, and proved the following blow up criteria that
$$\lim\limits_{T\rightarrow T^{*}}sup\left(\|\rho\|_{W^{1,q_{0}}(\Omega)}+\|\uu\|_{H^{1}(\Omega)}\right)=\infty,$$
with $q\in(3,\infty),q_{0}=\min{\{6,q\}}.$

\begin{definition}\label{def-sol}
	The pair $(\rho,\uu)$ is called a strong solution to system \eqref{1dlg-E2}-\eqref{S} if $(\rho,\uu)$ is a weak solution, satisfies equations \eqref{1dlg-E2} almost everywhere in $(0,T^*)\times \Omega$, and enjoys the following properties
	\begin{equation}\label{1dlg-E6}
		\begin{aligned}
			&\rho\in L^{\infty}(0,T^*; W^{1,6}(\Omega))  \hspace*{1.7cm}(\rho)_t \in L^\infty(0,T^*; L^6(\Omega))\\
			&\uu\in L^{\infty}(0,T^*; H^{1}(\Omega)) \hspace*{2cm} \nabla^2 \uu\in L^{2}(0, T^*; L^6(\Omega))\\
			&\sqrt{\rho}\uu_t\in L^{\infty}(0, T^*; L^2(\Omega)) \hspace*{1.5cm}  \uu_t\in L^{2}(0, T^*; H^{1}(\Omega)).
		\end{aligned}
	\end{equation}
\end{definition}

The aim of the present paper is to give the blow-up criterion to the system \eqref{1dlg-E2}-\eqref{initial value}.
Our main results are the following.

\begin{theorem}\label{theo1}
	Let $\Omega={\mathbb T}^3$ be a periodic domain and
	$$\mu>0\quad2\mu+\lambda>0\quad\delta\geqslant c>0.$$
	Suppose that the initial data $(\rho_{0},\uu_{0}) $ satisfy
	\begin{equation}\label{1dlg-E7}
		0\leqslant\rho_{0}\in W^{1,6}(\Omega),\quad \uu_{0}\in W^{2,6}(\Omega),
	\end{equation}
	and the compatibility condition:
	\begin{equation}\label{1dlg-E8}
		-\operatorname{div}\left(2\mu {\mathbb D}{\mathbf u}_0+\lambda\operatorname{div} \uu_{0}~\mathbb{I}+\tau^{*}(|{\mathbb D}{\mathbf u}_0|^{2}+\delta^{2})^{\frac{q-2}{2}}{\mathbb D}{\mathbf u}_0\right)+\nabla p_{0}=\sqrt{\rho_{0}}g,
	\end{equation}
	where $g $ is a function in $L^{6}(\Omega)$.
	Assume that $(\rho,\uu)$ is a local strong solution to the initial-boundary-value problem \eqref{1dlg-E2}-\eqref{initial value} satisfying \eqref{1dlg-E7}-\eqref{1dlg-E8} on $(0,T^*)\times \Omega$ for the maximal time of existence $T^*>0.$ Then
	\begin{equation}\label{1dlg-E9}
		\lim\limits_{T\rightarrow T^{*}}\int_{0}^{T}\|\nabla \uu\|_{L^{\infty}(\Omega)}^{4}dt=\infty.
	\end{equation}
\end{theorem}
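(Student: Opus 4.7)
The plan is to argue by contradiction: assume that $\int_0^{T^*}\|\nabla\uu\|_{L^\infty(\Omega)}^4\,dt\leq M<\infty$, and derive a priori bounds for every norm appearing in Definition \ref{def-sol} that are uniform on $[0,T^*)$. Combined with the local existence theorem of Bilal Al Taki cited above, the strong solution can then be extended beyond $T^*$, contradicting the maximality of $T^*$ and proving \eqref{1dlg-E9}.

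I would proceed in three layers of estimates. First, the basic energy identity (test \eqref{1dlg-E2}$_2$ with $\uu$) gives the uniform bound $\sup_t\|\sqrt{\rho}\uu\|_{L^2}+\|\nabla\uu\|_{L^2_{t,x}}\leq C$ together with the corresponding integrated $q$-power norm of $\D\uu$. Second, the continuity equation along characteristics yields $\|\rho(t)\|_{L^\infty}\leq \|\rho_0\|_{L^\infty}\exp\bigl(\int_0^t\|\divv\uu\|_{L^\infty}\bigr)$, and by Hölder
$$\int_0^T\|\divv\uu\|_{L^\infty}\,dt\leq T^{3/4}\Bigl(\int_0^T\|\nabla\uu\|_{L^\infty}^4\,dt\Bigr)^{1/4},$$
so $\|\rho\|_{L^\infty_{t,x}}\leq C(M,T^*)$.

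The decisive step is the higher-order velocity estimate. Testing the momentum equation with $\uu_t$ yields
$$\tfrac12\tfrac{d}{dt}\int\bigl(2\mu|\D\uu|^2+\lambda|\divv\uu|^2+\tau^* F(\D\uu)\bigr)dx+\int\rho|\uu_t|^2 dx=-\int\rho\uu\cdot\nabla\uu\cdot\uu_t\,dx-\int\nabla p\cdot\uu_t\,dx,$$
where $F$ is the convex potential of the power-law stress, nonnegative and smooth thanks to $\delta\geq c>0$. The convective term is controlled by $\|\nabla\uu\|_{L^\infty}\|\sqrt{\rho}\uu\|_{L^2}\|\sqrt{\rho}\uu_t\|_{L^2}$ and absorbed via Young, and the pressure term by the already-controlled $\|\rho\|_{L^\infty}$. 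Next I would differentiate the momentum equation in $t$ and test with $\uu_t$; the monotonicity of the power-law stress contributes a nonnegative term on the left, and after routine manipulations I expect an inequality of the form
$$\tfrac{d}{dt}\|\sqrt{\rho}\uu_t\|_{L^2}^2+\|\nabla\uu_t\|_{L^2}^2\leq C\bigl(1+\|\nabla\uu\|_{L^\infty}^4\bigr)\bigl(1+\|\sqrt{\rho}\uu_t\|_{L^2}^2+\|\nabla\uu\|_{L^2}^2\bigr)+C\|\nabla^2\uu\|_{L^2}^2,$$
which closes by Grönwall against the integrable majorant $\|\nabla\uu\|_{L^\infty}^4$.

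Finally, to upgrade to the higher-regularity norms of Definition \ref{def-sol}, I would read the momentum equation as an elliptic system for $\uu$ with forcing $\rho\uu_t+\rho\uu\cdot\nabla\uu+\nabla p$, treating the power-law stress as a bounded perturbation (legitimate once $\|\nabla\uu\|_{L^\infty}$ is controlled and because $\delta\geq c>0$); elliptic regularity then yields the $L^2(0,T;L^6)$ bound on $\nabla^2\uu$. Applying $\nabla$ to the continuity equation and testing with $|\nabla\rho|^4\nabla\rho$ produces a Grönwall estimate for $\|\nabla\rho\|_{L^6}$, with integrable coefficients $\|\nabla\uu\|_{L^\infty}$ and $\|\nabla^2\uu\|_{L^6}$; the $L^\infty_t L^6_x$ bound on $\rho_t$ follows directly. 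The main obstacle is precisely the nonlinear contribution produced by the power-law stress when differentiating the momentum equation in $t$: the choice of exponent $4$ in the criterion is tailored so that $\|\nabla\uu\|_{L^\infty}^4$ sits as an integrable coefficient in the Grönwall inequality, while the assumption $\delta\geq c>0$ ensures enough smoothness of the constitutive relation for these manipulations to be legitimate for every $q>1$.
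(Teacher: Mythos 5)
Your overall strategy coincides with the paper's: argue by contradiction from the assumption $\int_0^{T^*}\|\nabla\uu\|_{L^\infty}^4\,dt\leq M$, establish the basic energy estimate, bound $\|\rho\|_{L^\infty}$ by characteristics, test the momentum equation with $\uu_t$, differentiate in $t$ and test again with $\uu_t$, close by coupling with elliptic regularity and a Grönwall argument, and then re-initialize at $T^*$ to extend the solution. The minor cosmetic difference (you track $\nabla\rho$, the paper tracks $\nabla p=a\gamma\rho^{\gamma-1}\nabla\rho$) is immaterial since $\rho\in L^\infty$ is already controlled.

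The one place where your sketch is not quite correct as written is the $W^{2,6}$ step, where you propose to ``read the momentum equation as an elliptic system \ldots treating the power-law stress as a bounded perturbation.'' Since $\tau^*\divv\bigl((|\D\uu|^2+\delta^2)^{\frac{q-2}{2}}\D\uu\bigr)$ contains second derivatives of $\uu$, it has the \emph{same} order as the Newtonian operator; moving it to the right-hand side produces a term comparable to $\tau^*\|\nabla^2\uu\|_{L^6}$ with a coefficient that is merely bounded, not small, so it cannot be absorbed for a general yield stress $\tau^*$. The paper resolves this by keeping the power-law part inside the quasi-linear operator $\mathcal{A}(\uu,\D)$ and invoking the third assertion of Lemma~\ref{lem3}: the \emph{linearized} operator $\mathcal{A}(\uu^*,\D)$ at a reference state $\uu^*$ has maximal $L^p$-regularity under $\mu>0$, $2\mu+\lambda>0$, $q\geq 1$, $\delta\geq c>0$; and the genuine commutator $\mathcal{A}(\uu,\D)\uu-\mathcal{A}(\uu^*,\D)\uu$ is controlled by $\|\nabla(\uu-\uu^*)\|_{L^\infty}\|\uu\|_{W^{2,p}}$, which can be made small by choosing $\uu^*$ near $\uu$. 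If you replace the ``bounded perturbation'' phrase by this linearization-and-absorption argument, your proposal agrees with the paper's proof.
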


\begin{theorem}\label{theo2}
	Under the conditions of Theorem \ref{theo1},~the following is also true:
	\begin{equation}\label{1dlg-E10}
		\lim\limits_{T\rightarrow T^{*}}\left(\|\rho\|_{L^{\infty}(0,T;L^{\infty}(\Omega))}+\|\nabla \uu\|_{L^{\infty}(0,T;L^{3}(\Omega))}\right)=\infty.
	\end{equation}
\end{theorem}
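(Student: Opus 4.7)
The plan is to reduce Theorem \ref{theo2} to Theorem \ref{theo1} by contradiction. Assume
\[
\sup_{0\leqslant t<T^*}\bigl(\|\rho(t)\|_{L^{\infty}(\Omega)}+\|\nabla\uu(t)\|_{L^{3}(\Omega)}\bigr)\leqslant M<\infty.
\]
I would then aim to show $\int_0^{T^*}\|\nabla\uu\|_{L^{\infty}(\Omega)}^{4}\,dt<\infty$, contradicting \eqref{1dlg-E9} and hence forcing \eqref{1dlg-E10}. The backbone is a chain of a priori estimates upgrading the assumed $L^{\infty}_tL^{3}_x$ control of $\nabla\uu$ successively to $\nabla\uu\in L^{\infty}_tL^{6}_x$ and $\nabla^{2}\uu\in L^{2}_tL^{6}_x$, after which the target bound follows from a single Gagliardo--Nirenberg interpolation.

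First I would record the classical energy identity to obtain $\sqrt{\rho}\uu\in L^{\infty}_tL^{2}_x$ and $\nabla\uu\in L^{2}_tL^{2}_x$. Testing the momentum equation by $\uu_t$, and exploiting that--thanks to $\delta\geqslant c>0$--the non-Newtonian stress is the gradient of the convex potential $\tfrac{\tau^{*}}{q}(|\D\uu|^{2}+\delta^{2})^{q/2}$, would give $\sqrt{\rho}\uu_t\in L^{\infty}_tL^{2}_x$ and $\nabla\uu\in L^{\infty}_tL^{2}_x$; the convective contribution is absorbed via H\"older with $\|\nabla\uu\|_{L^{3}}\leqslant M$ and the embedding $H^{1}\hookrightarrow L^{6}$, while the pressure term is treated by integrating by parts in $t$ and inserting the continuity equation. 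Writing the momentum equation as the Lam\'e elliptic system
\[
-\mu\Delta\uu-(\mu+\lambda)\nabla\divv\uu=-\rho\uu_t-\rho\uu\cdot\nabla\uu-\nabla p+\divv\bigl(\tau^{*}(|\D\uu|^{2}+\delta^{2})^{(q-2)/2}\D\uu\bigr),
\]
Calder\'on--Zygmund theory with $p=2$ then yields $\nabla^{2}\uu\in L^{\infty}_tL^{2}_x$, and Sobolev embedding upgrades this to $\nabla\uu\in L^{\infty}_tL^{6}_x$.

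Next I would differentiate the momentum equation in $t$ and test by $\uu_t$; using the bounds already in hand, this gives $\nabla\uu_t\in L^{2}_tL^{2}_x$ and hence $\rho\uu_t\in L^{2}_tL^{6}_x$ by Sobolev. In parallel I propagate $\nabla\rho\in L^{\infty}_tL^{6}_x$ by differentiating the continuity equation, testing by $|\nabla\rho|^{4}\nabla\rho$, and running a Gr\"onwall argument coupled to the $\nabla\uu$ bounds, which delivers $\nabla p\in L^{2}_tL^{6}_x$. Feeding these into the Lam\'e system with $p=6$ promotes the second-order estimate to $\nabla^{2}\uu\in L^{2}_tL^{6}_x$. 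The Gagliardo--Nirenberg inequality in dimension three,
\[
\|\nabla\uu\|_{L^{\infty}}\leqslant C\|\nabla\uu\|_{L^{6}}^{1/2}\|\nabla^{2}\uu\|_{L^{6}}^{1/2},
\]
finally yields
\[
\int_0^{T^*}\|\nabla\uu\|_{L^{\infty}}^{4}\,dt\leqslant C\|\nabla\uu\|_{L^{\infty}_tL^{6}_x}^{2}\int_0^{T^*}\|\nabla^{2}\uu\|_{L^{6}}^{2}\,dt<\infty,
\]
which is the desired contradiction with \eqref{1dlg-E9}.

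The hard part is controlling the quasilinear non-Newtonian divergence $\divv\bigl(\tau^{*}(|\D\uu|^{2}+\delta^{2})^{(q-2)/2}\D\uu\bigr)$ consistently throughout the elliptic estimates. Expanding the chain rule produces factors of $(|\D\uu|^{2}+\delta^{2})^{(q-2)/2}$ and $(|\D\uu|^{2}+\delta^{2})^{(q-4)/2}|\D\uu|^{2}$ multiplying $\nabla^{2}\uu$. The hypothesis $\delta\geqslant c>0$ from \eqref{constant} is essential: for $1<q<2$ both factors are uniformly bounded by a power of $\delta$, so the non-Newtonian divergence is a controlled perturbation of the Lam\'e operator and can be absorbed by Young's inequality; for $q\geqslant 2$ they grow like $|\nabla\uu|^{q-2}$ and must be estimated through H\"older interpolation using the $\nabla\uu\in L^{\infty}_tL^{6}_x$ control secured in the previous step, so that the resulting remainder matches the target exponent of the elliptic right-hand side rather than closing the bootstrap loop too loosely.
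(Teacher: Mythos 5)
Your packaging of the argument — show $\int_0^{T^*}\|\nabla\uu\|_{L^\infty}^4\,dt<\infty$ under the contradiction hypothesis and then invoke Theorem \ref{theo1} — is a clean reduction and is not how the paper proceeds (the paper instead shows directly that the solution data at a time $t_1<T^*$ satisfies \eqref{1dlg-E7}-\eqref{1dlg-E8} and re-applies Lemma \ref{lem1}). The reduction is fine in principle, but the chain of a priori estimates you sketch to get there is out of order and, more seriously, does not close.

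First, testing the momentum equation by $\uu_t$ gives $\int_\Omega\rho|\uu_t|^2+\tfrac{d}{dt}\int_\Omega(\mu|\nabla\uu|^2+\dots)\lesssim\dots$; integrating in time this yields $\sqrt{\rho}\uu_t\in L^2_tL^2_x$ and $\nabla\uu\in L^\infty_tL^2_x$, \emph{not} $\sqrt{\rho}\uu_t\in L^\infty_tL^2_x$. Consequently your very next step — Calder\'on--Zygmund/Lemma \ref{lem3} with $p=2$ to get $\nabla^2\uu\in L^\infty_tL^2_x$ and hence $\nabla\uu\in L^\infty_tL^6_x$ — is unfounded, since that elliptic estimate requires $\rho\uu_t$ and $\nabla p$ in $L^\infty_tL^2_x$, neither of which you possess at that stage; the $L^\infty_t$ control of $\sqrt{\rho}\uu_t$ only comes from testing the \emph{time-differentiated} momentum equation by $\uu_t$, which is your later step. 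The estimates are therefore genuinely coupled, not a linear chain.

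Second, and this is the central gap, you assert the pressure-gradient propagation closes "by a Gr\"onwall argument." It does not close linearly. Differentiating the transported pressure equation produces the term $\|\nabla\uu\|_{L^\infty}\|\nabla p\|_{L^r}$ (cf. \eqref{3dlg-E15}), and $\|\nabla\uu\|_{L^\infty}$ must itself be bounded via Gagliardo--Nirenberg by $\|\nabla^2\uu\|_{L^6}$, which by the elliptic estimate involves $\|\nabla p\|_{L^6}$ again. This feedback produces a superlinear Riccati inequality — in the paper's version $f'(t)\leqslant Cf^4(t)$ with $f=1+\|\sqrt{\rho}\uu_t\|_{L^2}^2+\|\nabla p\|_{L^6}+\|\sqrt{p}\,\divv\uu\|_{L^2}^2$ (Lemma \ref{L4.3}) — which closes only on a time interval of length $\sim f(0)^{-3}$, not up to $T^*$. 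Your sketch never confronts this nonlinearity; without an argument that the resulting short-time bound can be iterated (or that the constants are small enough), the claim $\nabla\uu\in L^\infty_tL^6_x$, $\nabla^2\uu\in L^2_tL^6_x$ on all of $(0,T^*)$ does not follow, and the final Gagliardo--Nirenberg interpolation $\|\nabla\uu\|_{L^\infty}\leqslant C\|\nabla\uu\|_{L^6}^{1/2}\|\nabla^2\uu\|_{L^6}^{1/2}$ (which is itself correct) has nothing to feed on. You need to make the Riccati structure explicit and explain how it yields a contradiction, as the paper attempts in Lemma \ref{L4.3} and the closing extension argument.
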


We now comment on the analysis of this paper. In the proof of our main theorems, we shall deal with several difficulties. First, it is not easy to get higher order estimation by direct calculation, so we use the iterative method. Second, we shall use the results established in Lemma \ref{lem3} to estimate $\|\uu\|_{W^{2,6}(\Omega)}$ .

The rest of the paper is organized as follows: In Section 2, we collect some elementary
facts and inequalities which will be needed later. The main results, Theorem \ref{theo1} and Theorem \ref{theo2} proved in Section 3 and Section 4 respectively.  Notice that in all the estimates established below, we will denote by $C$ a generic constant may depending on $a, \mu, \lambda, \varepsilon$ and the norms of the data, however, it does not depend on the parameter $\delta$.

\section{Preliminaries}\label{Sec-P}

In this section, we begin with the local existence and uniqueness of strong solutions obtained in \cite{2023Wellposedness}.

\begin{lemma}\label{lem1}
	If the initial data $(\rho_{0},\uu_{0})$ satisfy \eqref{1dlg-E7} and \eqref{1dlg-E8}. then there exists a small time $T_1>0$ and a unique strong solution $(\rho,\uu)$ to the initial-boundary-value problem \eqref{1dlg-E2}-\eqref{initial value} in $\Omega\times(0,T_{1})$.
\end{lemma}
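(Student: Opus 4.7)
The plan is to construct the solution by a linearization-iteration scheme, exploiting the fact that $\delta\geq c>0$ makes the non-Newtonian stress tensor a uniformly elliptic (quasilinear) operator in $\uu$. First, I would regularize the initial density by $\rho_0^\varepsilon=\rho_0+\varepsilon$ to remove the vacuum, and fix a base velocity $\mathbf{v}$ in a Banach space reflecting the regularity of \eqref{1dlg-E6} on a short interval $[0,T_1]$, with a ball radius tuned by the data. Given $\mathbf{v}$, define $\rho$ as the unique solution of the linear transport equation $\partial_t\rho+\divv(\rho\mathbf{v})=0$ with $\rho(0)=\rho_0^\varepsilon$: this is handled by characteristics and gives $\rho\in L^\infty(0,T_1;W^{1,6}(\Omega))$ together with $\rho_t\in L^\infty(0,T_1;L^6(\Omega))$ using the momentum-style bounds on $\mathbf{v}$.

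Next, with $\rho$ fixed, solve the momentum equation for $\uu$,
\begin{equation*}
\rho\,\partial_t\uu-\divv\!\left(2\mu\D\uu+\lambda\divv\uu\,\mathbb I+\tau^{*}(|\D\uu|^2+\delta^2)^{\frac{q-2}{2}}\D\uu\right)=-\rho(\mathbf{v}\cdot\nabla)\uu-\nabla p(\rho),
\end{equation*}
as a quasilinear parabolic system. Because $\delta\geq c>0$, the principal part is strictly elliptic: the Jacobian of the non-Newtonian flux is a uniformly positive-definite tensor whose coefficients depend smoothly on $\D\uu$, so standard $L^p$ parabolic theory applies. I would close this step by a second fixed-point argument inside the outer iteration, or more directly by solving the linear problem with $\mathbf{v}$ in the advection and $\mathbf{v}$ freezing the coefficient in the non-Newtonian viscosity, and then iterating on that substitution. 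The energy and higher-order estimates—multiplying by $\uu_t$ and by $\partial_t$ of the momentum equation, using the compatibility condition \eqref{1dlg-E8} to bound $\sqrt{\rho}\uu_t(0)$ in $L^2$, and elliptic regularity for the stationary operator applied at fixed $t$—produce uniform bounds in the spaces of \eqref{1dlg-E6} independent of $\varepsilon$.

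Then I would show the iteration map is a contraction in a weaker norm (say $L^\infty(0,T_1;L^2)$ for $\uu$ and $\rho$) on a short time interval: subtract the equations for two successive iterates, test against the difference of velocities, use the $\delta$-regularization to dominate the difference of the non-Newtonian stresses by $C\|\nabla(\uu-\tilde\uu)\|_{L^2}^2$, and close by Gronwall, choosing $T_1$ small enough. Uniqueness follows from the same difference estimate applied to two strong solutions sharing the same data. Finally, pass to the limit $\varepsilon\to 0$ using the uniform bounds and weak-$*$ compactness, and verify the limit solution has all the regularity demanded by \eqref{1dlg-E6}.

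The main obstacles I anticipate are two: first, the interaction between the vacuum ($\rho_0\geq 0$ only) and the momentum time-derivative estimate, which is precisely what the compatibility condition \eqref{1dlg-E8} is designed to circumvent—this is delicate because one has to differentiate the quasilinear stress in $t$ and keep the resulting algebraic identity under control; second, closing the highest-order bound $\nabla^2\uu\in L^2(0,T_1;L^6)$, which requires interior elliptic regularity for the $\mathbf{v}$-frozen quasilinear operator with $W^{1,6}$ density coefficients, and a careful bootstrapping argument to absorb the $\tau^{*}$-term.
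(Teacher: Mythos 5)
The paper does not supply a proof of Lemma~\ref{lem1}: the result is imported verbatim from \cite{2023Wellposedness} at the beginning of Section~\ref{Sec-P}. Your sketch (regularize the vacuum by $\rho_0+\varepsilon$, linearize, iterate, contract in a weak norm, pass to the limit) is the standard Cho--Choe--Kim-style scheme which is indeed the framework used in that reference, so you are on the right track at the level of strategy; there is no ``paper's own proof'' to contrast it with.

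A few places where your outline would need repair if written out in full. First, ``standard $L^p$ parabolic theory applies'' is too optimistic at the momentum step: the time-derivative coefficient $\rho$ degenerates as $\varepsilon\to 0$, so one cannot rely on a maximal parabolic estimate uniform in $\varepsilon$. The closure that survives the limit is energy-type (test by $\uu_t$ and by the $t$-derivative of the momentum equation, using \eqref{1dlg-E8} to give $\sqrt{\rho}\uu_t(0)\in L^2$), combined with the \emph{stationary} maximal $L^p$ estimate of Lemma~\ref{lem3} applied at each fixed $t$ to the elliptic reformulation $-\mathcal{A}(\uu^*,\D)\uu=-\rho\uu_t-\rho(\uu\cdot\nabla)\uu-\nabla p+\big(\mathcal{A}(\uu,\D)-\mathcal{A}(\uu^*,\D)\big)\uu$, exactly as the paper later exploits in \eqref{lp-prob}--\eqref{3dlg-E19}. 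Second, the contraction in $L^\infty(0,T_1;L^2)$ for the density difference is not immediate: the difference equation carries a term $\nabla\rho\cdot(\mathbf v-\tilde{\mathbf v})$, and with $\rho\in W^{1,6}$ only, an $L^2\times L^2$ pairing does not directly absorb it; one must use $L^3$--$L^6$--$L^2$ type splittings or close in a slightly different norm. Third, while the monotonicity of $\xi\mapsto(|\xi|^2+\delta^2)^{(q-2)/2}\xi$ (valid for $q\geq 1$, $\delta\geq 0$, as the paper's own computation below \eqref{25-ch} shows) does give coercivity of the stress difference, the Lipschitz bound $C\|\nabla(\uu-\tilde\uu)\|_{L^2}^2$ you invoke is only \emph{local} for $q>2$; it needs the a priori bound $\|\nabla\uu\|_{L^\infty}\leq C$ from the iteration to hold uniformly, and in the functional setting \eqref{1dlg-E6} this comes through the $W^{2,6}\hookrightarrow W^{1,\infty}$ embedding. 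These are standard repairs; none of them invalidates the overall plan.
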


Next, the well-known Gagliardo-Nirenberg inequality which will be used later frequently (see \cite{1968Solonnikov}).

\begin{lemma}\label{lem2}
	{\upshape(\textbf{Gagliardo-Nirenberg inequality})}
	Let $\Omega\subset {\mathbf R}^n$ be a bounded domain and $j, k$ be positive integers. For $1\leqslant p,~r\leqslant \infty$ and $0\leqslant j< k,$ there exists some constant $C=C(n,k,p,r,j,\theta,\Omega)$ such that
	\begin{equation}
		\|\nabla^{j}\uu\|_{L^q(\Omega)}\leqslant C\|\nabla^{k}\uu\|_{L^p(\Omega)}^{\theta} \|\uu\|_{L^r(\Omega)}^{1-\theta}
	\end{equation}
	holds for any $\uu\in W_0^{k,p}(\Omega),$ where $\frac{1}{q}=\frac{j}{n}+\theta(\frac{1}{p}-\frac{k}{n})+\frac{1-\theta}{r}.$
\end{lemma}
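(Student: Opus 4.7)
The plan is to reduce the inequality to the whole space, establish the base case for first-order derivatives by an elementary capping argument, and then bootstrap to higher orders by interpolation, with the exponent relation pinned down by scaling. Since the hypothesis $u \in W_0^{k,p}(\Omega)$ means $u$ has vanishing trace on $\partial\Omega$, the extension by zero $\tilde{u}$ belongs to $W^{k,p}(\mathbf{R}^n)$ with $\|\tilde{u}\|_{W^{k,p}(\mathbf{R}^n)} \le \|u\|_{W^{k,p}(\Omega)}$. Thus it suffices to prove the inequality for $u \in C_c^\infty(\mathbf{R}^n)$ and then conclude by density.

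First I would treat the base case $k=1$, $j=0$, $p=1$. For $u \in C_c^\infty(\mathbf{R}^n)$, the fundamental theorem of calculus gives $|u(x)| \le \int_{\mathbf{R}} |\partial_i u(\ldots,x_i,\ldots)|\, dx_i$ for each coordinate $i$. Raising each estimate to the power $\tfrac{1}{n-1}$, multiplying over $i=1,\ldots,n$ and applying Hölder's inequality iteratively (the Gagliardo--Loomis--Whitney argument) yields $\|u\|_{L^{n/(n-1)}(\mathbf{R}^n)} \le C \|\nabla u\|_{L^1(\mathbf{R}^n)}$. Substituting $|u|^{\gamma}$ for a carefully chosen $\gamma>1$ and applying Hölder on the right upgrades this to $\|u\|_{L^{p^*}} \le C \|\nabla u\|_{L^p}$ for $1 \le p < n$, where $p^* = np/(n-p)$. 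Next, by Hölder's inequality in $L^q$ between $L^{p^*}$ and $L^r$ with exponent $\theta$ satisfying $1/q = \theta/p^* + (1-\theta)/r$, I would obtain the $j=0$ version of the target inequality.

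To handle $0 < j < k$, I would iterate. Applying the $j=0$ inequality to $\nabla^{j}u$ gives
\[
\|\nabla^j u\|_{L^q} \le C\,\|\nabla^{j+1}u\|_{L^{p_1}}^{\theta_1}\,\|\nabla^j u\|_{L^{r_1}}^{1-\theta_1},
\]
and a further iteration (combined with Young's inequality) transfers derivatives onto the top-order term until it reaches order $k$ and removes them from the lower-order term until it reaches order $0$. The precise choice of intermediate exponents is dictated by a scaling argument: both sides of the alleged inequality must transform identically under the dilation $u(x) \mapsto u(\lambda x)$ for $\lambda > 0$, and matching powers of $\lambda$ forces exactly
\[
\frac{1}{q} \;=\; \frac{j}{n} + \theta\!\left(\frac{1}{p} - \frac{k}{n}\right) + \frac{1-\theta}{r}.
\]

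The main obstacle will be the borderline regimes---the critical Sobolev exponent $kp = n$, the case $p = \infty$, and the endpoints $\theta \in \{0,1\}$---where the plain Sobolev embedding either degenerates or becomes a continuous embedding rather than an $L^{p^*}$ estimate. For those cases I would replace the Sobolev step by Morrey's inequality (when $(k-j)p > n$) or by a truncation/Trudinger--Moser-type estimate (when $(k-j)p = n$), and verify by direct bookkeeping that the resulting exponents still satisfy the scaling identity. A density argument from $C_c^\infty(\Omega)$ to $W_0^{k,p}(\Omega)$, together with tracking the dependence of constants through the extension and iteration, completes the proof with $C = C(n,k,p,r,j,\theta,\Omega)$.
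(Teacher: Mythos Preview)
The paper does not prove this lemma at all: it is stated in the preliminaries as a well-known inequality with a citation to Ladyzhenskaya--Solonnikov--Ural'tseva, and no argument is given. Your proposal therefore goes well beyond what the paper does.

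As a sketch of the classical proof, your outline is essentially sound: the extension-by-zero reduction to $\mathbf{R}^n$, the Loomis--Whitney/Gagliardo argument for the $k=1$, $p=1$ Sobolev embedding, the power substitution to reach general $p<n$, and the scaling heuristic for the exponent relation are all standard and correct. The one place where your plan is thinner than it looks is the iteration from the $j=0$ case to general $0<j<k$. Simply reapplying the $j=0$ estimate to $\nabla^{j}u$ and ``transferring derivatives'' does not by itself produce an inequality with only $\|\nabla^{k}u\|_{L^{p}}$ and $\|u\|_{L^{r}}$ on the right; intermediate norms $\|\nabla^{m}u\|$ for $0<m<k$ appear, and eliminating them requires the separate one-dimensional interpolation inequality $\|\nabla u\|_{L^{s}}\leqslant C\|\nabla^{2}u\|_{L^{s}}^{1/2}\|u\|_{L^{s}}^{1/2}$ (and its higher-order analogues), proved by an integration-by-parts and splitting argument, together with an induction on $k-j$. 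You would need to make that step explicit for the argument to close. The borderline cases you flag (critical exponent, $p=\infty$, endpoint $\theta$) are genuine and your suggested remedies are the standard ones.
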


Next, we recall some estimates of nonlinear elliptic system stated in \cite{2023Wellposedness}.

\begin{lemma}\label{lem3}
	Let $\Omega$ be a periodic domain in ${\mathbf R}^d.$ Given a function $f\in L^p(\Omega)~(1<p<\infty)$ such that
	$$\int_\Omega f\,dx=0.$$
	Assume that $\uu$ is a unique solution to the following nonlinear elliptic system
	\begin{align}
		\begin{cases}\label{elliptic-ope-GENERAL}
			-\divv \Ss_\delta=f,\\
			\Ss_\delta =
			2\mu{\mathbb D}{\mathbf u}+\lambda \divv \uu\,\mathbb{I}+\tau^*(|{\mathbb D}{\mathbf u}|^2 +\delta^2)^{\frac{q-2}{2}} \D\uu.
		\end{cases}
	\end{align}
	Then the following assertions hold.
	\begin{itemize}
		\item If $d=1,$ then the solution $\uu$ of \eqref{elliptic-ope-GENERAL} belongs to $W^{2,p}(\Omega)$ provided that $\mu>0,\, q\geqslant 1$ and $\delta\geqslant 0$. In particular, we have
		\begin{equation*}
			\dfrac{\mu}{p} \intO |\del_x^2\uu|^p\,dx+\tau^*(q-1)\intO
			|\del_x\uu|^2(|\del_x \uu|^2+\delta^2)^{\frac{q-4}{2}}|\del_x^2 \uu|^{p}\,dx\leqslant \dfrac{\mu^{1-p}}{p} \intO |f|^p\,dx.
		\end{equation*}
		
		\item If $d=2$ (or $3$) and $f\in L^{2}(\Omega)$,  then the solution $\uu$ of \eqref{elliptic-ope-GENERAL} belongs to $H^{2}(\Omega)$ provided that $q\geqslant 1,~\delta\geqslant 0$, for the case that there exist some small $\varepsilon>0$ such that $\mu>\varepsilon,~ 2\mu+\lambda>\varepsilon.$ In particular, there exists constant $C>0$ independent of $\delta$ such that
		\begin{align*}
			&\mu \intO |\nabla \curl \uu |^2\,dx +(2\mu+\lambda) \intO |\nabla \divv \uu|^2\,dx \\
			&\quad+\tau^*\min(1,(q-1))\intO  |{\mathbb D}{\mathbf u}|^2 (|{\mathbb D}{\mathbf u}|^2+\delta^2)^{\frac{q-4}{2}} |\nabla{\mathbb D}{\mathbf u} |^2\,dx\leqslant C \intO |f|^2\,dx.
		\end{align*}
		
		\item If $d=2$ (or $3$),  then the linearized operator associated to equation \eqref{elliptic-ope-GENERAL} at a reference solution $\uu^*\in W^{2-\frac{2}{p},p}(\Omega)~( p>d+2),$ denoted by $\mathcal{A}(\uu^*, \D)$  still yield maximal $L^p-$regularity  provided that $\mu>0,\; 2\mu +\lambda>0, \; q\geqslant 1$ and $\delta \geqslant c$ for some $c>0.$ Moreover, there exists constant $C>0$ dependent on $\delta$ (the notation $C(\delta^{-1})$ concerns a constant who may have an unfavorable effect when $\delta$ becomes close to zero), such that
		\[
		\| \uu \|_{W^{2,p}(\Omega)}\leqslant \|\mathcal{A}(\uu^*, \D) \uu\|_{L^p (\Omega)}\leqslant C(\delta^{-1}) \| f\|_{L^p (\Omega)}.
		\]
	\end{itemize}
\end{lemma}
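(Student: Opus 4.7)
My general strategy is to split $\divv\Ss_\delta$ into a Lam\'e part $\mu\Delta\uu + (\mu+\lambda)\nabla\divv\uu$ plus the nonlinear piece $\tau^{*}\divv[(|\mathbb{D}\uu|^{2}+\delta^{2})^{(q-2)/2}\mathbb{D}\uu]$ and to test the equation against the derivative of $\uu$ naturally paired with the target norm; existence in each part reduces to Galerkin approximation plus the a priori bound, so I focus on the estimates. For part (i), I would test with $|\partial_x^2 \uu|^{p-2}\partial_x^2 \uu$ and integrate by parts once. The chain-rule identity
\[
\partial_x\!\left[(|\partial_x \uu|^2+\delta^2)^{\frac{q-2}{2}}\partial_x \uu\right] = \bigl((q-1)|\partial_x \uu|^2+\delta^2\bigr)(|\partial_x \uu|^2+\delta^2)^{\frac{q-4}{2}}\partial_x^2 \uu
\]
has coefficient in front of $\partial_x^2 \uu$ non-negative for every $q\ge 1$; the Lam\'e part then produces $\tfrac{\mu}{p}\int|\partial_x^2 \uu|^{p}$, the nonlinear part contributes the $\tau^{*}(q-1)$-summand of the statement, and Young applied to $\int f\,|\partial_x^2 \uu|^{p-2}\partial_x^2 \uu$ yields the right-hand constant $\tfrac{\mu^{1-p}}{p}$.

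For part (ii), I would multiply the equation by $-\Delta\uu$ and integrate. The periodic identity $\int|\Delta\uu|^{2}=\int|\nabla\divv\uu|^{2}+\int|\nabla\curl\uu|^{2}$ (two integrations by parts) lets the Lam\'e contribution produce exactly $\mu\int|\nabla\curl\uu|^{2} + (2\mu+\lambda)\int|\nabla\divv\uu|^{2}$. Writing $B=(|\mathbb{D}\uu|^{2}+\delta^{2})^{(q-2)/2}$, the nonlinear part integrates by parts (using symmetry of $\mathbb{D}\uu$) to
\[
\tau^{*}\!\int_{\Omega}\! B\,|\nabla\mathbb{D}\uu|^{2}\,dx + \tau^{*}(q-2)\!\int_{\Omega}\!(|\mathbb{D}\uu|^{2}+\delta^{2})^{\frac{q-4}{2}}\sum_k(\mathbb{D}\uu\!:\!\partial_k\mathbb{D}\uu)^{2}\,dx.
\]
For $q\ge 2$ both terms are non-negative and the first already dominates $\tau^{*}\int|\mathbb{D}\uu|^{2}(|\mathbb{D}\uu|^{2}+\delta^{2})^{(q-4)/2}|\nabla\mathbb{D}\uu|^{2}$. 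For $1<q<2$, the Cauchy-Schwarz bound $(\mathbb{D}\uu\!:\!\partial_k\mathbb{D}\uu)^{2}\le|\mathbb{D}\uu|^{2}|\partial_k\mathbb{D}\uu|^{2}$ combined with the splitting $B=(|\mathbb{D}\uu|^{2}+\delta^{2})^{(q-4)/2}(|\mathbb{D}\uu|^{2}+\delta^{2})$ reduces the sum to $\tau^{*}(q-1)\int|\mathbb{D}\uu|^{2}(|\mathbb{D}\uu|^{2}+\delta^{2})^{(q-4)/2}|\nabla\mathbb{D}\uu|^{2}$. Young's inequality on $-\int f\cdot\Delta\uu$ with a small parameter, absorbed against the Lam\'e contribution, closes the estimate.

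For part (iii), the linearization $\mathcal{A}(\uu^{*},\mathbb{D})$ is a linear elliptic system $-\divv(\mathcal{B}(\uu^{*})\nabla\uu)$ whose fourth-order coefficient tensor depends polynomially on $\mathbb{D}\uu^{*}$ with entries built from the scalars $(|\mathbb{D}\uu^{*}|^{2}+\delta^{2})^{(q-2)/2}$ and $(|\mathbb{D}\uu^{*}|^{2}+\delta^{2})^{(q-4)/2}$. Since $\delta\ge c>0$, these building blocks are smooth and bounded on bounded sets, and $\uu^{*}\in W^{2-2/p,p}$ with $p>d+2$ embeds into $C^{1,\alpha}(\Omega)$, making $\mathcal{B}(\uu^{*})$ H\"older continuous. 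Combined with $\mu>0$, $2\mu+\lambda>0$ and $q\ge 1$, the operator is strongly elliptic in the Legendre-Hadamard sense, so classical maximal $L^{p}$-regularity for linear strongly elliptic systems with H\"older coefficients on the torus (Agmon-Douglis-Nirenberg, or Amann's semigroup framework) gives the stated bound; the constant inherits its $\delta^{-1}$-dependence from the lower ellipticity bound $(|\mathbb{D}\uu^{*}|^{2}+\delta^{2})^{(q-4)/2}$ in the regime $q<2$.

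The principal obstacle throughout is the subquadratic case $1<q<2$. In parts (i) and (ii), the product rule on the nonlinear term generates a cross term with sign opposite to that of the coercive term $\tau^{*}\int B|\nabla\mathbb{D}\uu|^{2}$, and recovering the sharp factor $\min(1,q-1)$ requires the exact algebraic identity $(|\mathbb{D}\uu|^{2}+\delta^{2})-(2-q)|\mathbb{D}\uu|^{2}=(q-1)|\mathbb{D}\uu|^{2}+\delta^{2}$ rather than any generic bound. In part (iii), the H\"older regularity of the coefficients—and hence applicability of the classical theory—depends crucially on $\delta\ge c>0$; without this, $(|\mathbb{D}\uu^{*}|^{2}+\delta^{2})^{(q-4)/2}$ degenerates on the zero set of $\mathbb{D}\uu^{*}$, which is precisely why the resulting constant blows up as $\delta\to 0$.
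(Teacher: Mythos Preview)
The paper does not prove this lemma: it is introduced with ``Next, we recall some estimates of nonlinear elliptic system stated in \cite{2023Wellposedness}'' and is quoted verbatim from Al Taki's well-posedness paper, so there is no in-paper proof to compare against.

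That said, your sketch is the natural route and matches what one finds in the cited source. In part (i) you correctly observe that in one space dimension the nonlinear flux differentiates to $[(q-1)|\partial_x\uu|^2+\delta^2](|\partial_x\uu|^2+\delta^2)^{(q-4)/2}\partial_x^2\uu$, whose prefactor is nonnegative for $q\ge1$; testing against $|\partial_x^2\uu|^{p-2}\partial_x^2\uu$ and applying Young's inequality with weight $\mu$ then gives the displayed constants. In part (ii), testing with $-\Delta\uu$ (equivalently, differentiating in $x_k$ and testing with $\partial_k\uu$) together with the torus identities $\int|\nabla^2\uu|^2=\int|\Delta\uu|^2=\int|\nabla\divv\uu|^2+\int|\nabla\curl\uu|^2$ produces the Lam\'e contribution exactly, and your treatment of the power-law term---splitting $B=(|\D\uu|^2+\delta^2)^{(q-4)/2}(|\D\uu|^2+\delta^2)$ and using Cauchy--Schwarz on $(\D\uu:\partial_k\D\uu)^2$ when $1<q<2$---is precisely how the factor $\min(1,q-1)$ arises. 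In part (iii) your reduction to classical maximal $L^p$-regularity for strongly elliptic systems with H\"older coefficients (via the embedding $W^{2-2/p,p}\hookrightarrow C^{1,\alpha}$ for $p>d+2$ and the fact that $\delta\ge c>0$ keeps $(|\D\uu^*|^2+\delta^2)^{(q-4)/2}$ smooth) is the argument used in the cited reference, and your identification of the $\delta^{-1}$-dependence with the degeneracy of this factor for $q<2$ is correct.

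One small point: in part (i) the constants in the statement are written with $\mu$ rather than $2\mu+\lambda$; this is a convention in the one-dimensional reduction (the $\lambda\divv\uu\,\mathbb{I}$ and $2\mu\D\uu$ terms coalesce), not a flaw in your argument.
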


In addition, we explain the notations and conventions uesd throughout this paper. We define the function $\beta: \RR^+\rightarrow \RR^+$ as
\begin{equation}\label{def-gamma}
	\beta(s)=\mu +\dfrac{\tau^*}{2}s^{\frac{q-2}{2}},
\end{equation}
and denote by $B:= |{\mathbb D}{\mathbf u}|^2 +\delta^2$, then we can rewrite $\divv \Ss$ as
$$\divv \Ss=\beta(B)\Delta \uu +\big(\lambda+\beta(B)\big)\nabla \divv \uu +2\beta^{\prime}(B)\nabla\big( |{\mathbb D}{\mathbf u}|^2\big)\cdot {\mathbb D}{\mathbf u}.$$
Since $\D$ is symmetric, the $i^{th}$ entry of $\divv \Ss$ becomes
\begin{align*}
	\big[\divv \Ss\big]_i&= \underset{k=1}{\overset{3}{\sum}}\big(\beta(B)\del_k^2 \uu_i+(\lambda+\beta(B))\del_i\del_k \uu_k\big)+4\beta^{\prime}(B)\underset{j,k,l=1}{\overset{3}{\sum}}{\mathbb D}_{ij}{\mathbf u} {\mathbb D}_{kl}{\mathbf u}\del_j{\mathbb D}_{kl}{\mathbf u}\\
	&=\underset{k=1}{\overset{3}{\sum}}\big(\beta(B)\del_k^2 \uu_i+(\lambda+\beta(B))\del_i\del_k \uu_k\big)+4\beta^{\prime}(B)\underset{j,k,l=1}{\overset{3}{\sum}}{\mathbb D}_{ik}{\mathbf u}{\mathbb D}_{jl}{\mathbf u}\del_k\del_l \uu_{j}\\
	&=\underset{j,k,l=1}{\overset{3}{\sum}}a_{ij}^{kl}\del_k\del_l\uu_j,
\end{align*}
where
\begin{equation*}
	a_{i,j}^{k,l}=\beta(B) \delta_{kl}\delta_{ij}+(\lambda+\beta(B))\delta_{il}\delta_{jk}+4\beta^{\prime}(B){\mathbb D}_{ik}{\mathbf u}{\mathbb D}_{jl}{\mathbf u},
\end{equation*}
with $\delta_{kl}$ denoting the Kronecker symbol. Define the quasi-linear differential operator $\mathcal{A}(\uu,\D)$ as
\begin{equation}\label{defA}
	\mathcal{A}(\uu,\D)=\underset{k,l=1}{\sum}A^{k,l}{\mathbb D}_{k}{\mathbf u}{\mathbb D}_{l}{\mathbf u},
\end{equation}
where the matrix-valued coefficients
\begin{equation*}
	A^{k,l}(\uu)=\big(a_{i,j}^{k,l}\big).
\end{equation*}

\section{Proof of Theorem \ref{theo1}}\label{Sec-The1}
Let $(\rho,u)$ be a local strong solution to the problem \eqref{1dlg-E2}-\eqref{initial value} as described in Theorem \ref{theo1}.

\begin{lemma}\label{L3.1}
	Under the condition of Theorem \ref{theo1}, the standard energy estimate yields that
	\begin{equation}\label{energy}
		\sup_{0\leqslant t\leqslant T}\int_{\Omega}(\rho|\uu|^{2}+\rho^\gamma)dx+\int_{0}^{T}\int_{\Omega}|\nabla \uu|^{2}dxdt+\int_{0}^{T}\int_{\Omega}|{\mathbb D}{\mathbf u}|^{q}dxdt\leqslant C,
	\end{equation}
	holds for any $T\in (0,T^*),$ where $C$ is a constant depending on the initial date.
\end{lemma}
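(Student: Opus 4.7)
The plan is to carry out the standard energy estimate by testing the momentum equation against $\uu$ and combining with the continuity equation, then harvesting the three listed bounds from the resulting dissipation identity.

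First I would derive the kinetic energy balance: multiply $\eqref{1dlg-E2}_2$ by $\uu$, integrate over $\Omega$, and use $\eqref{1dlg-E2}_1$ to rewrite $\int(\rho\uu)_t\cdot\uu+\int\divv(\rho\uu\otimes\uu)\cdot\uu=\tfrac{d}{dt}\int\tfrac12\rho|\uu|^2$. Integration by parts on the torus (no boundary contributions) gives
\begin{equation*}
\frac{d}{dt}\int_\Omega \tfrac12\rho|\uu|^2\,dx+\int_\Omega \Ss:\nabla\uu\,dx+\int_\Omega \uu\cdot\nabla p\,dx=0.
\end{equation*}
Using the symmetry of $\Ss$ and the constitutive law \eqref{S},
\begin{equation*}
\Ss:\nabla\uu=2\mu|{\mathbb D}{\mathbf u}|^2+\lambda(\divv\uu)^2+\tau^*(|{\mathbb D}{\mathbf u}|^2+\delta^2)^{\frac{q-2}{2}}|{\mathbb D}{\mathbf u}|^2.
\end{equation*}

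Next I would handle the pressure term by producing the internal energy identity. From $p=a\rho^\gamma$ and the continuity equation,
\begin{equation*}
\frac{d}{dt}\int_\Omega \frac{a}{\gamma-1}\rho^\gamma\,dx=-\int_\Omega p\,\divv\uu\,dx=\int_\Omega \uu\cdot\nabla p\,dx,
\end{equation*}
so adding this to the kinetic balance yields the full energy identity, whose time integral already gives
$\sup_{[0,T]}\int(\rho|\uu|^2+\rho^\gamma)\,dx\leqslant C$ and the dissipation bound $\int_0^T\int_\Omega \Ss:\nabla\uu\,dxdt\leqslant C$.

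It then remains to extract the two dissipation-controlled norms from $\int\Ss:\nabla\uu$. For the $L^2$-gradient bound, the identity $\int_\Omega|{\mathbb D}{\mathbf u}|^2\,dx=\tfrac12\int_\Omega|\nabla\uu|^2\,dx+\tfrac12\int_\Omega(\divv\uu)^2\,dx$ on the torus (obtained by integrating by parts $\int\partial_iu_j\partial_ju_i=\int(\divv\uu)^2$) gives
\begin{equation*}
\int_\Omega\bigl(2\mu|{\mathbb D}{\mathbf u}|^2+\lambda(\divv\uu)^2\bigr)dx=\mu\int_\Omega|\nabla\uu|^2\,dx+(\mu+\lambda)\int_\Omega(\divv\uu)^2\,dx.
\end{equation*}
If $\mu+\lambda\geqslant 0$ this directly dominates $\mu\|\nabla\uu\|_{L^2}^2$; otherwise I bound $(\divv\uu)^2\leqslant|\nabla\uu|^2$ pointwise and use $2\mu+\lambda>0$ to conclude. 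For the $L^q$-symmetric-gradient bound, I split on $q$: when $q\geqslant 2$, $(|{\mathbb D}{\mathbf u}|^2+\delta^2)^{(q-2)/2}\geqslant|{\mathbb D}{\mathbf u}|^{q-2}$ so the third dissipation piece already dominates $\tau^*|{\mathbb D}{\mathbf u}|^q$; when $1<q<2$, the pointwise bound $|{\mathbb D}{\mathbf u}|^q\leqslant |{\mathbb D}{\mathbf u}|^2+1$ reduces the $L^q$ control to the $L^2$ bound just obtained plus the finite measure of $\Omega$.

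The calculation is routine; the only subtle points are the handling of the sign of $\mu+\lambda$ in the Newtonian dissipation and the split in $q$ for the power-law term, and I expect these to be the mildly nontrivial steps rather than a real obstacle.
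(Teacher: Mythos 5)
The paper states Lemma \ref{L3.1} without proof (``the standard energy estimate yields \dots''), so there is no authorial argument to compare against; your overall strategy --- test the momentum equation by $\uu$, produce the internal energy balance from the continuity equation, add, and harvest the three bounds from the dissipation --- is exactly the standard one and is correctly laid out, including the identities for $\Ss:\nabla\uu$ and $\int_\Omega|{\mathbb D}{\mathbf u}|^2\,dx$ on the torus, and the split on $q$ for the power-law term.

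There is, however, one genuine error in the coercivity step. You claim that when $\mu+\lambda<0$ one can use the \emph{pointwise} bound $(\divv\uu)^2\leqslant|\nabla\uu|^2$ together with $2\mu+\lambda>0$. That pointwise inequality is false: taking $\uu$ with $\nabla\uu=\mathbb{I}$ locally gives $(\divv\uu)^2=9$ while $|\nabla\uu|^2=3$. The true pointwise bound is $(\divv\uu)^2\leqslant 3|\nabla\uu|^2$, and feeding that into your identity produces the coefficient $4\mu+3\lambda$, which the assumption $2\mu+\lambda>0$ does \emph{not} force to be positive (e.g.\ $\mu=1$, $\lambda=-1.4$). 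So as written the argument does not close when $\mu+\lambda<0$. The repair is to use the \emph{integral} inequality $\int_\Omega(\divv\uu)^2\,dx\leqslant\int_\Omega|\nabla\uu|^2\,dx$, which is available on the torus --- either by Cauchy--Schwarz applied to the identity $\int_\Omega(\divv\uu)^2\,dx=\int_\Omega\partial_i\uu_j\,\partial_j\uu_i\,dx$ that you already derived, or equivalently via the decomposition $\int_\Omega|\nabla\uu|^2\,dx=\int_\Omega|\curl\uu|^2\,dx+\int_\Omega(\divv\uu)^2\,dx$, which yields
\[
\mu\int_\Omega|\nabla\uu|^2\,dx+(\mu+\lambda)\int_\Omega(\divv\uu)^2\,dx=\mu\int_\Omega|\curl\uu|^2\,dx+(2\mu+\lambda)\int_\Omega(\divv\uu)^2\,dx\geqslant\min(\mu,2\mu+\lambda)\int_\Omega|\nabla\uu|^2\,dx.
\]
With ``pointwise'' replaced by this integrated form the step is correct and the rest of your proof goes through. (A cosmetic point: your bound for $1<q<2$ introduces a factor $T|\Omega|$, so the resulting constant depends on $T$; this is harmless here since one is only interested in $T<T^*<\infty$, but it is slightly at odds with the lemma's phrasing that $C$ depends only on the initial data.)
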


Suppose that there exists some constant $M>0 $ such that
\begin{equation}\label{counter}
	\int_{0}^{T}\|\nabla \uu\|_{L^{\infty}(\Omega)}^{4}dt\leqslant M,
\end{equation}
for any $0<T<T^*$.

\begin{lemma}\label{L3.2}
	Under the condition of Theorem \ref{theo1} and \eqref{counter}, there exists a positive constant $C>0$ such that
	\begin{equation}\label{2dlg-E4}
		\sup_{0\leqslant t\leqslant T}\|\rho\|_{L^{\infty}(\Omega)}\leqslant C,
	\end{equation}
	holds for any $T\in (0,T^*).$
\end{lemma}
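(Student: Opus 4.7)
The plan is to use the method of characteristics on the continuity equation, which is the standard and cleanest way to propagate an $L^\infty$ bound on $\rho$ under a Lipschitz-type assumption on $u$.

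First I would rewrite the continuity equation $\partial_t \rho + \divv(\rho \uu) = 0$ in non-conservative form as
\[
\partial_t \rho + \uu \cdot \nabla \rho = -\rho\, \divv \uu.
\]
Let $X(t,x)$ denote the flow of $\uu$, i.e.\ $\tfrac{d}{dt} X(t,x) = \uu(t, X(t,x))$ with $X(0,x)=x$. Along characteristics the equation becomes the ODE
\[
\frac{d}{dt}\rho(t,X(t,x)) = -\rho(t,X(t,x))\,\divv\uu(t,X(t,x)),
\]
which integrates to
\[
\rho(t,X(t,x)) = \rho_0(x)\,\exp\!\left(-\int_0^t \divv\uu(s,X(s,x))\,ds\right).
\]
Since $X(t,\cdot)$ is a bijection of $\Omega={\mathbb T}^3$ (the existence and Lipschitz regularity of the flow are ensured by the regularity of $\uu$ guaranteed by the strong solution framework of Definition \ref{def-sol}), taking the supremum in $x$ gives
\[
\|\rho(t)\|_{L^\infty(\Omega)} \leqslant \|\rho_0\|_{L^\infty(\Omega)} \exp\!\left(\int_0^t \|\divv\uu(s)\|_{L^\infty(\Omega)}\,ds\right).
\]

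Next I would control the exponent using the assumption \eqref{counter}. Since $|\divv\uu|\leqslant \sqrt{3}\,|\nabla\uu|$ pointwise, H\"older's inequality in time yields
\[
\int_0^T \|\divv\uu\|_{L^\infty(\Omega)}\,ds \leqslant C\int_0^T \|\nabla\uu\|_{L^\infty(\Omega)}\,ds \leqslant C\,T^{3/4}\!\left(\int_0^T \|\nabla\uu\|_{L^\infty(\Omega)}^4\,ds\right)^{1/4} \leqslant C\,(T^*)^{3/4}\,M^{1/4}.
\]
Combining this with the previous display and using the initial bound $\rho_0\in W^{1,6}(\Omega)\hookrightarrow L^\infty(\Omega)$ from \eqref{1dlg-E7} gives the desired estimate \eqref{2dlg-E4} with a constant $C$ depending only on $\|\rho_0\|_{L^\infty}$, $T^*$ and $M$.

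There is no real obstacle here beyond the routine justification of the characteristic method; the only delicate point is ensuring that the flow $X$ is well-defined and measure-preserving enough to transfer sup-norms from $(t,X(t,x))$ back to $(t,y)$, which follows from the regularity $\uu\in L^\infty(0,T^*; H^1)\cap L^2(0,T^*; W^{2,6})$ together with Sobolev embedding, making $\nabla \uu$ at least in $L^1_t L^\infty_x$. Since the estimate uses only the $L^4_t L^\infty_x$ control of $\nabla\uu$ and Gr\"onwall-type exponentiation, the bound on $\|\rho\|_{L^\infty}$ is, importantly, independent of the lower bound for $\delta$, as promised in the introduction.
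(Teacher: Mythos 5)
The paper omits this proof, labeling it ``simple,'' so there is no official argument to compare against; but your characteristics-based proof is exactly the standard one and is correct. Rewriting the continuity equation in non-conservative form, integrating along the flow to get $\rho(t,X(t,x)) = \rho_0(x)\exp\bigl(-\int_0^t \divv\uu(s,X(s,x))\,ds\bigr)$, transferring the sup by bijectivity of the flow (bi-Lipschitz since $\nabla\uu \in L^1_t L^\infty_x$, so null sets are preserved in both directions --- no measure-preservation is actually needed, only that $X(t,\cdot)$ is a homeomorphism mapping null sets to null sets), and finally applying H\"older in time to convert the $L^4_t L^\infty_x$ hypothesis \eqref{counter} into an $L^1_t L^\infty_x$ bound on $\divv\uu$ is airtight. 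One stylistic remark: the same conclusion is reachable without introducing the flow map at all, by multiplying the continuity equation by $r\rho^{r-1}$, integrating by parts to get $\frac{d}{dt}\|\rho\|_{L^r}^r \leqslant (r-1)\|\divv\uu\|_{L^\infty}\|\rho\|_{L^r}^r$, applying Gr\"onwall, and letting $r\to\infty$; this avoids the (minor but real) justification that the flow is well defined and transfers essential suprema. Both routes give the bound $\|\rho(t)\|_{L^\infty}\leqslant \|\rho_0\|_{L^\infty}\exp\bigl(C\,(T^*)^{3/4}M^{1/4}\bigr)$, which, as you correctly note, is independent of $\delta$ since the continuity equation never sees the stress tensor.
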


The proof of Lemma \ref{L3.2} is simple, and the proof is omitted here.

\begin{lemma}\label{L3.3}
	Under the condition of Theorem \ref{theo1} and \eqref{counter},  there exists a positive constant $C$ such that
	\begin{equation}\label{tidu u}
		\frac{d}{dt}\|\nabla \uu\|_{L^{2}(\Omega)}^{2}+\int_{\Omega}\rho|\uu_{t}|^{2}dx\leqslant C\|\nabla \uu\|_{L^{\infty}(\Omega)}(1+\|\sqrt{\rho}\uu_{t}\|_{L^{2}(\Omega)}^{2})+\varepsilon\|\nabla \uu_{t}\|_{L^{2}(\Omega)}^{2}+C,
	\end{equation}
	for any fixed $\varepsilon\in (0,1).$
\end{lemma}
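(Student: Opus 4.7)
The plan is to derive \eqref{tidu u} by testing the momentum equation against $\uu_t$. Using the continuity equation to recast the momentum balance in non-conservative form $\rho\uu_t + \rho(\uu\cdot\nabla)\uu = \divv\Ss - \nabla p$, multiplying by $\uu_t$, integrating over $\Omega$, and integrating by parts (no boundary terms thanks to the periodic boundary condition) produces the identity
\[
\int_\Omega\rho|\uu_t|^2\,dx + \int_\Omega\Ss:\nabla\uu_t\,dx + \int_\Omega\rho(\uu\cdot\nabla)\uu\cdot\uu_t\,dx - \int_\Omega p\,\divv\uu_t\,dx = 0.
\]

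The key observation is that $\int\Ss:\nabla\uu_t$ is a total time derivative. By symmetry of $\D\uu$, the Newtonian part integrates to $\frac{d}{dt}\int[\mu|\D\uu|^2 + \tfrac{\lambda}{2}(\divv\uu)^2]\,dx$, and the chain rule applied to the power-law potential gives
\[
\tau^*\int_\Omega(|\D\uu|^2+\delta^2)^{\frac{q-2}{2}}\D\uu:\D\uu_t\,dx = \frac{d}{dt}\int_\Omega \tfrac{\tau^*}{q}(|\D\uu|^2+\delta^2)^{q/2}\,dx.
\]
On $\mathbb{T}^3$ the Korn-type identity $2\int|\D\uu|^2 = \int|\nabla\uu|^2 + \int(\divv\uu)^2$ and the bound $\int(\divv\uu)^2 \leqslant \int|\nabla\uu|^2$ combine with the assumption $2\mu+\lambda>0$ to ensure that the Newtonian energy dominates $\min(\mu,2\mu+\lambda)\|\nabla\uu\|_{L^2}^2$; the non-Newtonian piece is nonnegative, so after time differentiation the left-hand side of \eqref{tidu u} is controlled from below (up to absorbing a nonnegative term into the effective energy).

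For the convective term, Hölder's inequality together with the basic bound $\|\sqrt\rho\uu\|_{L^2}\leqslant C$ from Lemma \ref{L3.1} and the elementary inequality $x\leqslant 1+x^2$ give
\[
\left|\int_\Omega\rho(\uu\cdot\nabla)\uu\cdot\uu_t\,dx\right|\leqslant \|\nabla\uu\|_{L^\infty}\|\sqrt\rho\uu\|_{L^2}\|\sqrt\rho\uu_t\|_{L^2}\leqslant C\|\nabla\uu\|_{L^\infty}\bigl(1+\|\sqrt\rho\uu_t\|_{L^2}^2\bigr).
\]
The pressure term is estimated using the density bound $\|\rho\|_{L^\infty}\leqslant C$ of Lemma \ref{L3.2}, which implies $\|p\|_{L^2}\leqslant C$, so that Young's inequality yields
\[
\left|\int_\Omega p\,\divv\uu_t\,dx\right|\leqslant C\|\nabla\uu_t\|_{L^2}\leqslant \varepsilon\|\nabla\uu_t\|_{L^2}^2 + C/\varepsilon.
\]
These bounds together account for the $\varepsilon\|\nabla\uu_t\|_{L^2}^2$ term and the additive constant $C$ in \eqref{tidu u}; collecting everything yields the claimed differential inequality.

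The main subtlety is bookkeeping rather than analytic depth. One has to verify that $\int\Ss:\nabla\uu_t$ really does integrate to a clean time derivative despite the nonlinearity in the stress (the hypotheses $q\geqslant 1$ and $\delta\geqslant c>0$ give the power-law potential $(s+\delta^2)^{q/2}$ sufficient smoothness for the chain rule), and one has to handle the non-Newtonian energy carefully: its pointwise time derivative has no definite sign, but being the derivative of a nonnegative functional it may be kept on the left alongside $\tfrac{d}{dt}\|\nabla\uu\|_{L^2}^2$ without spoiling the subsequent Gronwall-type argument used in the proof of Theorem \ref{theo1}.
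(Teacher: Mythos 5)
Your proof is correct and follows essentially the same route as the paper's: test the momentum equation with $\uu_t$, recognize $\int_\Omega\Ss:\nabla\uu_t\,dx$ as a total time derivative of the Newtonian plus power-law energy, and bound the convective and pressure terms via H\"older, the energy bound $\|\sqrt{\rho}\uu\|_{L^2}\leqslant C$, the density bound $\|\rho\|_{L^\infty}\leqslant C$, and Young's inequality. The only addition you make beyond the paper is the explicit Korn-type bookkeeping showing that the Newtonian energy controls $\|\nabla\uu\|_{L^2}^2$ under $\mu>0$, $2\mu+\lambda>0$, and the remark that the whole effective energy (not merely $\|\nabla\uu\|_{L^2}^2$) must be carried through the later Gronwall argument; this is a useful clarification of a point the paper states rather tersely, but it is not a different method.
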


\begin{proof}
	Multiplying $\eqref{1dlg-E2}_{2}$ by $\uu_t$ and integrating the resulting equation over $\Omega$, we obtain that
	\begin{align}\label{188-ch}
		&\int_{\Omega}\rho |\uu_t|^{2}\,dx+\dfrac{1}{2}\dfrac{d}{dt}\int_{\Omega}\big(\mu |\nabla \uu|^{2}+(\lambda+\mu)(\divv \uu)^2+\dfrac{2}{q}\tau^* (|{\mathbb D}{\mathbf u}|^2 +\delta^2)^{\frac{q}{2}}\big)\,dx\nonumber\\
		&=\intO \big(-\rho\uu\cdot\nabla \uu\big)\cdot\uu_t\,dx-\intO \nabla p\cdot \uu_t\,dx.
	\end{align}
	Based on Lemma \ref{L3.1} and Lemma \ref{L3.2}, each term on the right-hand side of equation \eqref{188-ch} is estimated as follows
	\begin{align*}
		\quad\int_{\Omega}-\rho \uu\cdot\nabla \uu\cdot \uu_{t}~dx &\leqslant\|\nabla \uu\|_{L^{\infty}(\Omega)}\|\sqrt{\rho}\uu\|_{L^{2}(\Omega)}\|\sqrt{\rho}\uu_{t}\|_{L^{2}(\Omega)}\\
		&\leqslant C\|\nabla \uu\|_{L^{\infty}(\Omega)}(1+\|\sqrt{\rho}\uu_{t}\|_{L^{2}(\Omega)}^{2}),\\
		-\intO \nabla p\cdot \uu_t\,dx&=\intO p\operatorname{div}\uu_t\,dx\leqslant \|p\|_{L^{2}(\Omega)}\|\operatorname{div}\uu_{t}\|_{L^{2}(\Omega)}\leqslant \varepsilon\|\nabla \uu_{t}\|_{L^{2}(\Omega)}^{2}+C.
	\end{align*}
	for any fixed $\varepsilon\in (0,1).$	Thus, one arrives at
	\begin{align}
		&\int_{\Omega}\rho|\uu_{t}|^{2}dx+\frac{1}{2}\frac{d}{dt}\int_{\Omega}(\mu|\nabla \uu|^{2}+(\lambda+\mu)(\operatorname{div}\uu)^{2}+\dfrac{2}{q}\tau^* (|{\mathbb D}{\mathbf u}|^2 +\delta^2)^{\frac{q}{2}})dx\nonumber\\
		&\leqslant C\|\nabla \uu\|_{L^{\infty}(\Omega)}(1+\|\sqrt{\rho}\uu_{t}\|_{L^{2}(\Omega)}^{2})+\varepsilon\|\nabla \uu_{t}\|_{L^{2}(\Omega)}^{2}+C.
	\end{align}
	and so \eqref{tidu u} is obtained.
\end{proof}

\begin{lemma}\label{L3.5}
	Under the condition of Theorem \ref{theo1} and \eqref{counter},  there exists a positive constant $C$ such that
	\begin{align}\label{tidu ut}
		&\frac{d}{dt}\int_{\Omega}\left(\rho|\uu_{t}|^{2}+p(\operatorname{div}\uu)^{2}\right)dx+\int_{\Omega}|\nabla \uu_{t}|^{2}dx\\
		&\leqslant C(\|\nabla \uu\|_{L^{\infty}(\Omega)}^{4}+1)(1+\|\sqrt{\rho} \uu_{t}\|_{L^{2}(\Omega)}^{2}+\|\nabla p\|_{L^{2}(\Omega)}^{2})+\varepsilon\|\nabla^{2}\uu\|_{L^{2}(\Omega)}^{2},\nonumber
	\end{align}
	for any fixed $\varepsilon\in (0,1).$
\end{lemma}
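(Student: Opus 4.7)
The plan is to apply $\del_t$ to the momentum equation $\eqref{1dlg-E2}_2$, written in the non-conservative form $\rho\uu_t+\rho\uu\cdot\nabla\uu+\nabla p=\operatorname{div}\Ss$, and test the resulting identity against $\uu_t$ in $L^2(\Omega)$. Using the continuity equation $\rho_t=-\operatorname{div}(\rho\uu)$ to rearrange the time-derivative of the momentum flux, the leading dissipative contribution takes the shape $\tfrac12\tfrac{d}{dt}\int_\Omega\rho|\uu_t|^2\,dx$, while the remaining terms coming from $(\rho\uu\cdot\nabla\uu)_t\cdot\uu_t$ are convective errors to be estimated in the last step.

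For the viscous contribution I differentiate \eqref{S} in $t$. The Newtonian piece yields $\int_\Omega(2\mu|{\mathbb D}{\mathbf u}_t|^2+\lambda(\operatorname{div}\uu_t)^2)\,dx$, which by Korn's inequality and \eqref{constant} dominates a positive multiple of $\|\nabla\uu_t\|_{L^2}^2$. The power-law piece produces, after $\del_t$, a coercive term $\tau^*(|{\mathbb D}{\mathbf u}|^2+\delta^2)^{(q-2)/2}|{\mathbb D}{\mathbf u}_t|^2$ plus a cross-term $\tau^*(q-2)(|{\mathbb D}{\mathbf u}|^2+\delta^2)^{(q-4)/2}({\mathbb D}{\mathbf u}:{\mathbb D}{\mathbf u}_t)^2$; the cross-term is non-negative for $q\geq 2$, and for $1<q<2$ the elementary bound $|q-2||{\mathbb D}{\mathbf u}|^2\leq |{\mathbb D}{\mathbf u}|^2+\delta^2$ shows that it is absorbed by the coercive part.

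The key new ingredient, and the reason why the quantity $\int_\Omega p(\operatorname{div}\uu)^2\,dx$ appears on the left-hand side, is the treatment of $\int_\Omega\nabla p_t\cdot\uu_t\,dx$. Since $p=a\rho^\gamma$, the continuity equation yields $p_t=-\uu\cdot\nabla p-\gamma p\operatorname{div}\uu$, hence after integration by parts
\[
-\int_\Omega p_t\operatorname{div}\uu_t\,dx=\int_\Omega(\uu\cdot\nabla p)\operatorname{div}\uu_t\,dx+\gamma\int_\Omega p\operatorname{div}\uu\operatorname{div}\uu_t\,dx.
\]
Recognising $p\operatorname{div}\uu\operatorname{div}\uu_t=\tfrac12 p\,\del_t(\operatorname{div}\uu)^2$ and moving the resulting total time derivative to the LHS delivers the $\tfrac{\gamma}{2}\tfrac{d}{dt}\int_\Omega p(\operatorname{div}\uu)^2\,dx$ contribution of the claim; the residual $-\tfrac{\gamma}{2}\int_\Omega p_t(\operatorname{div}\uu)^2\,dx$ is expanded again through the continuity equation and bounded by $C\|\nabla\uu\|_{L^\infty}\|\nabla p\|_{L^2}^2$ plus lower-order terms.

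All remaining integrals (the convective errors and $\int_\Omega(\uu\cdot\nabla p)\operatorname{div}\uu_t\,dx$) are estimated by H\"older's inequality, the Sobolev embedding $H^1(\Omega)\hookrightarrow L^6(\Omega)$, the density bound from Lemma \ref{L3.2}, and Young's inequality, absorbing the slack $\varepsilon\|\nabla\uu_t\|_{L^2}^2$ and $\varepsilon\|\nabla^2\uu\|_{L^2}^2$ on the right-hand side. The typical bound $\|\uu\|_{L^\infty}\|\nabla p\|_{L^2}\|\nabla\uu_t\|_{L^2}$ produces the signature quartic factor $\|\nabla\uu\|_{L^\infty}^4\|\nabla p\|_{L^2}^2$ after Young's inequality, upon interpolating $\|\uu\|_{L^\infty}$ against $\|\nabla\uu\|_{L^\infty}$ and the energy bound from Lemma \ref{L3.1}. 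The main obstacle I anticipate is routing the pressure contribution so that $\tfrac{d}{dt}\int_\Omega p(\operatorname{div}\uu)^2\,dx$ emerges cleanly on the left-hand side without leaving uncontrolled $\nabla^2\uu$-terms, together with verifying that the non-Newtonian cross-term is absorbed uniformly across the full range $q>1$.
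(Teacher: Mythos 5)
Your proposal follows essentially the same route as the paper's proof: differentiate the momentum equation in time, test against $\uu_t$, use the continuity equation $p_t+\divv(p\uu)+(\gamma-1)p\divv\uu=0$ to push the total derivative $\tfrac{\gamma}{2}\tfrac{d}{dt}\int_\Omega p(\divv\uu)^2\,dx$ onto the left-hand side, verify that the time-differentiated non-Newtonian stress contributes non-negatively for all $q>1$ (the paper writes the surviving coefficient as $(q-1)|\D\uu|^2+\delta^2\geq0$, which is the same inequality you obtain via $|q-2|\,|\D\uu|^2\leq|\D\uu|^2+\delta^2$ together with Cauchy--Schwarz on the cross term), and bound the remaining convective and pressure errors by H\"older, Sobolev embedding, the density bound, and Young, absorbing the $\varepsilon\|\nabla\uu_t\|_{L^2}^2$ slack into the viscous dissipation. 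The only cosmetic difference is that you invoke Korn's inequality to recover $\|\nabla\uu_t\|_{L^2}^2$ from $2\mu|\D\uu_t|^2+\lambda(\divv\uu_t)^2$, whereas the paper works directly with the equivalent form $\mu\|\nabla\uu_t\|_{L^2}^2+(\lambda+\mu)\|\divv\uu_t\|_{L^2}^2$ on the torus, and the quartic factor $\|\nabla\uu\|_{L^\infty}^4$ in fact originates from the $\rho|\uu|^2|\uu_t||\nabla^2\uu|$ and $\rho|\uu|^2|\nabla\uu||\nabla\uu_t|$ convective terms rather than from the pressure-gradient term; these are minor and do not affect the argument.
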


\begin{proof}	
	The equation $\eqref{1dlg-E2}_{2}$ is rewritten as
	\begin{equation*}
		\rho \del_t\uu +(\rho \uu\cdot \nabla) \uu -\mu \Delta \uu -(\lambda+\mu)\nabla \divv \uu -\tau^* \divv\big(( |{\mathbb D}{\mathbf u}|^2 +\delta^2)^{\frac{q-2}{2}}{\mathbb D}{\mathbf u}\big)+\nabla p=0.
	\end{equation*}
	Differentiating the above equation with respect to time, one gets that
	\begin{align*}
		&\rho \uu_{tt}+\rho (\uu\cdot\nabla) \uu_t-\mu\Delta \uu_t-(\lambda+\mu)\nabla \divv \uu_t-\tau^*{\rm div}\big(( |{\mathbb D}{\mathbf u}|^2 +\delta^2)^{\frac{q-2}{2}}{\mathbb D}{\mathbf u} \big)_t+\nabla p_t\\
		&=-\rho_t(\uu_t+(\uu\cdot \nabla) \uu)-\rho (\uu_t\cdot\nabla) \uu.
	\end{align*}
	Multiplying the above equation by $\uu_t$ and integrating the resulting equation over $\Omega$, one obtains that
	\begin{align}\label{25-ch}
			&\frac{1}{2}\dfrac{d}{dt}\intO\rho|\uu_t|^2\,dx
			+\mu\intO|\nabla \uu_t|^2\,dx +(\lambda+\mu)\intO |\divv \uu_t|^2\, dx-\tau^*\int_{\Omega}\uu_t\cdot{\rm div}\Big(( |{\mathbb D}{\mathbf u}|^2 +\delta^2)^{\frac{q-2}{2}}{\mathbb D}{\mathbf u}\Big)_t\,dx\nonumber\\
			&=-\intO \rho_t \big(\uu_t+(\uu\cdot \nabla) \uu\big)\cdot\uu_t\,dx -\intO\rho (\uu_t\cdot\nabla) \uu\cdot \uu_t\,dx+\intO  p_t \divv \uu_t\,dx \nonumber\\
			& = - \intO\rho \uu\cdot\nabla((\uu_t+(\uu\cdot \nabla) \uu)\cdot \uu_t)\,dx-\intO\rho( \uu_t\cdot\nabla) \uu\cdot \uu_t\,dx+\intO  p_t \divv \uu_t\,dx .
	\end{align}
	It follows from the equation $\eqref{1dlg-E2}_{1}$ that
	\begin{equation}\label{transform}
		p_{t}+\operatorname{div}(p\uu)+(\gamma-1)p\operatorname{div}\uu=0.
	\end{equation}
	Hence,
	\begin{align*}
		-\intO p_t\divv \uu_t\,dx &= \intO (\nabla p\cdot \uu +\gamma p\divv \uu)\divv \uu_t\,dx\\
		&=\intO\left( (\nabla p\cdot \uu )\divv \uu_t\right)\,dx+\dfrac{\gamma}{2}\Dt\intO p(\divv\uu)^2\,dx-\dfrac{\gamma}{2}\intO p_t (\divv \uu)^2\,dx\\
		& = \frac{d}{dt}\intO\frac{\gamma}{2}p(\divv \uu)^2dx+\intO\nabla p\cdot \uu\divv \uu_t\,dx\\
		&\quad +\frac{\gamma}{2}\Big(\intO-p \uu\cdot\nabla(\divv \uu)^2\,dx+(\gamma-1)\intO p(\divv \uu)^3\,dx\Big).
	\end{align*}
	Substituting this identity into \eqref{25-ch}, we obtain that
	\begin{align}\label{26ch}
		\frac{d}{dt}&\intO \left(\frac{1}{2}\rho|\uu_t|^2+\frac{\gamma}{2}p (\divv \uu)^2 \right)\,dx+\mu \intO |\nabla \uu_t|^2\,dx+(\lambda+\mu)\intO|\divv \uu_t|^2\,dx\nonumber\\
		&\quad\quad-\tau^*\int_{\Omega}\uu_t\cdot{\rm div}\Big(( |{\mathbb D}{\mathbf u}|^2 +\delta^2)^{\frac{q-2}{2}}{\mathbb D}{\mathbf u}\Big)_t
		\,dx \nonumber\\
		& \leqslant C\intO\Big(p |\nabla \uu|^3+p |\uu||\nabla \uu||\nabla^2 \uu|+|\nabla p||\uu||\nabla \uu_t|+\rho |\uu||\uu_t||\nabla \uu_t|\nonumber\\
		&\quad \quad +\rho |\uu||\uu_t|| \nabla \uu|^2+ \rho |\uu|^2|\uu_t||\nabla^2 \uu|+\rho |\uu|^2|\nabla \uu|| \nabla \uu_t|+\rho |\uu_t|^2|\nabla \uu|\Big)\,dx\nonumber\\
		&:=\sum_{k=1}^{8} I_k.
	\end{align}
	Note that
	\begin{align*}
		&-\int_{\Omega}\uu_t\cdot{\rm div}\Big(( |{\mathbb D}{\mathbf u}|^2 +\delta^2)^{\frac{q-2}{2}}{\mathbb D}{\mathbf u}\Big)_t
		\,dx\\
		&=\int_{\Omega}{\mathbb D}{\mathbf u}_t:\big( ( |{\mathbb D}{\mathbf u}|^2 +\delta^2)^{\frac{q-2}{2}}{\mathbb D}{\mathbf u} \big)_t\,dx\\
		&=\int_{\Omega}( |{\mathbb D}{\mathbf u}|^2 +\delta^2)^{\frac{q-2}{2}}{\mathbb D}{\mathbf u}_t:{\mathbb D}{\mathbf u}_t\,dx+\dfrac{q-2}{2}\int_{\Omega} ( |{\mathbb D}{\mathbf u}|^2 +\delta^2)^{\frac{q-4}{2}}{\mathbb D}{\mathbf u}(|{\mathbb D}{\mathbf u}|^{2})_{t}:{\mathbb D}{\mathbf u}_{t}\,dx\\
		&=\int_{\Omega} ( |{\mathbb D}{\mathbf u}|^2 +\delta^2)^{\frac{q-2}{2}}|{\mathbb D}{\mathbf u}_t|^2\,dx+(q-2)\int_{\Omega}( |{\mathbb D}{\mathbf u}|^2 +\delta^2)^{\frac{q-4}{2}}|{\mathbb D}{\mathbf u}|^{2} |{\mathbb D}{\mathbf u}_t|^2\,dx\\
		&\geqslant \int_{\Omega} ( |{\mathbb D}{\mathbf u}|^2 +\delta^2)^{\frac{q-4}{2}}\big((q-1)|{\mathbb D}{\mathbf u}|^2+\delta^2  \big)|{\mathbb D}{\mathbf u}_t|^2\,dx\geqslant 0
	\end{align*}
	for all $q>1.$
	Now, each term on the right hand of (\ref{26ch}) is estimated as follows
	\begin{align*}
		&|I_{1}|\leqslant C \|p\|_{L^{\infty}(\Omega)}\|\nabla \uu\|_{L^{3}(\Omega)}^{3}\leqslant C\|\nabla \uu\|_{L^{\infty}(\Omega)}^{3},\\
		& |I_{2}|\leqslant C\|\rho\|_{L^{\infty}(\Omega)}^{\gamma-\frac{1}{2}}\|\sqrt{\rho }\uu\|_{L^{2}(\Omega)}\|\nabla \uu\|_{L^{\infty}(\Omega)}\|\nabla^{2}\uu\|_{L^{2}(\Omega)}\leqslant C\|\nabla \uu\|_{L^{\infty}(\Omega)}^{2}+\varepsilon\|\nabla^{2}\uu\|_{L^{2}(\Omega)}^{2},\\
		& |I_{3}|\leqslant C\|\nabla p\|_{L^{2}(\Omega)}\|\uu\|_{L^{\infty}(\Omega)}\|\nabla \uu_{t}\|_{L^{2}(\Omega)}\leqslant C\|\nabla \uu\|_{L^{\infty}(\Omega)}^{2}\|\nabla p\|_{L^{2}(\Omega)}^{2}+\varepsilon\|\nabla \uu_{t}\|_{L^{2}(\Omega)}^{2},\\
		& |I_{4}|\leqslant C\|\sqrt{\rho}\uu_{t}\|_{L^{2}(\Omega)}\|\uu\|_{L^{\infty}(\Omega)}\|\nabla \uu_{t}\|_{L^{2}(\Omega)}\leqslant C\|\nabla \uu\|_{L^{\infty}(\Omega)}^{2}\|\sqrt{\rho}\uu_{t}\|_{L^{2}(\Omega)}^{2}+\varepsilon\|\nabla \uu_{t}\|_{L^{2}(\Omega)}^{2},\\
		& |I_{5}|\leqslant C\|\sqrt{\rho}\uu\|_{L^{2}(\Omega)}\|\sqrt{\rho} \uu_{t}\|_{L^{2}(\Omega)}\|\nabla \uu\|_{L^{\infty}(\Omega)}^{2}\leqslant C\|\nabla \uu\|_{L^{\infty}(\Omega)}^{2}(\|\sqrt{\rho} \uu_{t}\|_{L^{2}(\Omega)}^{2}+1),\\
		& |I_{6}|\leqslant C\|\sqrt{\rho} \uu_{t}\|_{L^{2}(\Omega)}\|\nabla \uu\|_{L^{\infty}(\Omega)}^{2}\|\nabla^{2}\uu\|_{L^{2}(\Omega)}\leqslant C\|\nabla \uu\|_{L^{\infty}(\Omega)}^{4}\|\sqrt{\rho} \uu_{t}\|_{L^{2}(\Omega)}^{2}+\varepsilon\|\nabla^{2}\uu\|_{L^{2}(\Omega)}^{2},\\
		& |I_{7}|\leqslant C\|\sqrt{\rho} \uu\|_{L^{2}(\Omega)}\|\nabla \uu\|_{L^{\infty}(\Omega)}^{2}\|\nabla \uu_{t}\|_{L^{2}(\Omega)}\leqslant C\|\nabla \uu\|_{L^{\infty}(\Omega)}^{4}+\varepsilon\|\nabla \uu_{t}\|_{L^{2}(\Omega)}^{2},\\
		& |I_{8}|\leqslant C\|\nabla \uu\|_{L^{\infty}(\Omega)}\|\sqrt{\rho} \uu_{t}\|_{L^{2}(\Omega)}^{2}.
	\end{align*}
	Substituting all the estimates into~\eqref{26ch}, we get that
	\begin{equation*}
		\begin{aligned}
			&\frac{d}{dt}\int_{\Omega}\left(\rho|\uu_{t}|^{2}+p(\operatorname{div}\uu)^{2}\right)dx+\int_{\Omega}|\nabla \uu_{t}|^{2}dx\\
			&\leqslant C(\|\nabla \uu\|_{L^{\infty}(\Omega)}^{4}+1)(1+\|\sqrt{\rho} \uu_{t}\|_{L^{2}(\Omega)}^{2}+\|\nabla p\|_{L^{2}(\Omega)}^{2})+\varepsilon\|\nabla^{2}\uu\|_{L^{2}(\Omega)}^{2}.
		\end{aligned}
	\end{equation*}
	for any fixed $\varepsilon\in (0,1).$
\end{proof}

\begin{lemma}\label{L3.6}
	Under the condition of Theorem \ref{theo1} and \eqref{counter},  there exists a positive constant $C$ such that
	\begin{eqnarray}
		&&\frac{d}{dt}\|\nabla p\|_{L^6(\Omega)}
		\leqslant  C( \|\nabla \uu\|_{L^\infty(\Omega)}\|\nabla p\|_{L^6 (\Omega)}+\|\nabla^2 \uu\|_{L^6 (\Omega)}),\label{3dlg-E15}\\
		&&\frac{d}{dt}\|\nabla p\|_{L^2(\Omega)}^{2}
		\leqslant  C( \|\nabla \uu\|_{L^\infty(\Omega)}+1 )\|\nabla p\|_{L^2(\Omega)}^{2}+\varepsilon\|\nabla^2 \uu\|_{L^2 (\Omega)}^{2},\label{3dlg-E15-1}
	\end{eqnarray}
	and
	\begin{equation}\label{Lp-ch}
		\quad\quad\|\uu\|_{W^{2,6}(\Omega)}\leqslant C(1+\|\nabla \uu\|^{2}_{H^{1}(\Omega)}+\|\nabla p\|_{L^{6}(\Omega)})+\varepsilon\|\nabla \uu_{t}\|_{L^{2}(\Omega)}^2,
	\end{equation}
	for any fixed $\varepsilon\in (0,1).$	
\end{lemma}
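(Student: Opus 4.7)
The three estimates split naturally: the first two are transport-type estimates for $\nabla p$, while the third is an elliptic regularity estimate for the momentum equation viewed as a quasi-linear Stokes-type system.

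For \eqref{3dlg-E15} and \eqref{3dlg-E15-1}, I would start from \eqref{transform}, equivalently $p_t + \uu\cdot\nabla p + \gamma p\operatorname{div}\uu = 0$, and apply $\nabla$ to obtain
\begin{equation*}
(\nabla p)_t + (\uu\cdot\nabla)\nabla p = -(\nabla\uu)^T\nabla p - \gamma(\operatorname{div}\uu)\nabla p - \gamma p\,\nabla\operatorname{div}\uu.
\end{equation*}
Multiplying by $|\nabla p|^{r-2}\nabla p$ (taking $r=6$ and then $r=2$) and integrating over $\Omega$, the convective term contributes $-\tfrac{1}{r}\intO\operatorname{div}\uu\,|\nabla p|^r\,dx$ after integration by parts, which combines with the second source term. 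Using H\"older together with $\|p\|_{L^\infty(\Omega)}\leqslant C$ from Lemma \ref{L3.2} yields
\begin{equation*}
\tfrac{1}{r}\tfrac{d}{dt}\|\nabla p\|_{L^r(\Omega)}^r \leqslant C\|\nabla\uu\|_{L^\infty(\Omega)}\|\nabla p\|_{L^r(\Omega)}^r + C\|\nabla^2\uu\|_{L^r(\Omega)}\|\nabla p\|_{L^r(\Omega)}^{r-1}.
\end{equation*}
Dividing by $\|\nabla p\|_{L^6(\Omega)}^5$ when $r=6$ gives \eqref{3dlg-E15}, while applying Young's inequality to the last term when $r=2$ produces \eqref{3dlg-E15-1}.

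For \eqref{Lp-ch}, I would rewrite $\eqref{1dlg-E2}_{2}$ as the quasi-linear elliptic system $-\operatorname{div}\Ss = f$ with $f:=-\rho\uu_t-\rho(\uu\cdot\nabla)\uu-\nabla p$, and apply the third assertion of Lemma \ref{lem3} with exponent $6>d+2=5$ and reference field $\uu^*=\uu$ (which lies in the required trace space by interpolation from Definition \ref{def-sol}) to obtain $\|\uu\|_{W^{2,6}(\Omega)}\leqslant C\|f\|_{L^{6}(\Omega)}$. The three pieces of $f$ are then estimated in turn: $\|\nabla p\|_{L^6(\Omega)}$ appears directly in the target inequality; for the convective term, $\|\rho(\uu\cdot\nabla)\uu\|_{L^6(\Omega)}\leqslant C\|\uu\|_{L^\infty(\Omega)}\|\nabla\uu\|_{L^6(\Omega)}\leqslant C\|\nabla\uu\|_{H^1(\Omega)}^2$ by $\|\rho\|_{L^\infty(\Omega)}\leqslant C$, the Sobolev embedding $H^1\hookrightarrow L^6$, and a Gagliardo-Nirenberg bound on $\|\uu\|_{L^\infty(\Omega)}$; and finally $\|\rho\uu_t\|_{L^6(\Omega)}\leqslant C\|\uu_t\|_{L^6(\Omega)}\leqslant \tfrac{\varepsilon}{C}\|\uu_t\|_{L^6(\Omega)}^2+C\leqslant\varepsilon\|\nabla\uu_t\|_{L^2(\Omega)}^2+C$ by Young's inequality followed by Sobolev embedding. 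Summing yields \eqref{Lp-ch}.

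The main technical difficulty is this last bound in the presence of vacuum: because $\rho$ may vanish, $\|\sqrt{\rho}\uu_t\|_{L^2(\Omega)}$ does not dominate $\|\uu_t\|_{L^2(\Omega)}$, so applying the embedding $H^1\hookrightarrow L^6$ to $\uu_t$ on the torus must be justified by controlling its zero-frequency mode. This mode is handled by integrating $\eqref{1dlg-E2}_{2}$ over $\Omega$: the stress and pressure terms vanish by periodicity, so $\intO\rho\uu_t\,dx$ is controlled by the convective integral, which is in turn bounded via the energy estimate \eqref{energy}; subtracting the resulting mean then reduces the remaining estimate to a standard Poincar\'e inequality on the mean-zero component of $\uu_t$.
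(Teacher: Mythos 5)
Your treatment of \eqref{3dlg-E15} and \eqref{3dlg-E15-1} matches the paper's: both differentiate the pressure transport equation, multiply by $|\nabla p|^{r-2}\nabla p$ with $r=6$ and $r=2$, integrate, and use H\"older plus the $L^\infty$-bound on $p$. That part is fine.

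For \eqref{Lp-ch}, two points need attention. First, the paper does \emph{not} take $\uu^*=\uu$: it writes the momentum equation with a commutator term $\mathcal{A}(\uu,\D)\uu-\mathcal{A}(\uu^*,\D)\uu$, shows this term is bounded by $C\|\nabla(\uu-\uu^*)\|_{L^\infty}\|\uu\|_{W^{2,p}}$, and then \emph{chooses a fixed reference} $\uu^*$ so that $\uu$ is a small perturbation of $\uu^*$, absorbing the $\varepsilon\|\uu\|_{W^{2,p}}$ contribution. The reason you cannot simply set $\uu^*=\uu$ is that the maximal-regularity constant in the third assertion of Lemma~\ref{lem3} depends on the reference field $\uu^*$ through $\|\uu^*\|_{W^{2-2/p,p}}$, and for $\uu^*=\uu$ that is essentially the quantity you are trying to bound. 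Your choice would render the argument circular; the paper's frozen-coefficient perturbation is what breaks the circle.

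Second, your observation that the embedding $\|\uu_t\|_{L^6}\leqslant C\|\nabla\uu_t\|_{L^2}$ needs the zero-frequency mode handled on the torus is astute (the paper uses it silently), but the fix you propose does not work. Integrating $\eqref{1dlg-E2}_{2}$ over $\Omega$ controls $\intO\rho\uu_t\,dx$ (up to convective terms), not $\intO\uu_t\,dx$, and in the presence of vacuum these are unrelated: $\rho$ can vanish on a set of positive measure, so a bound on the $\rho$-weighted mean gives no information on the Lebesgue mean of $\uu_t$. To apply Poincar\'e you must control $\intO\uu_t\,dx$ itself, which your integration of the momentum equation does not achieve. As written, this part of your argument leaves the gap open rather than closing it.
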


\begin{proof}
	The equation $\eqref{1dlg-E2}_{1}$ implies that
	\begin{equation*}\label{cont-pressure}
		p_t+\divv(p\uu)+(\gamma-1)p\divv \uu=0.
	\end{equation*}
	Differentiating it with respect to $x_k$ yields
	\begin{equation}\label{xk}
		(p_{x_k})_t+\uu\cdot \nabla p_{x_k}+\nabla p\cdot \uu_{x_k}+\gamma p_{x_k}\divv \uu +\gamma p\divv \uu_{x_k}=0.
	\end{equation}
	Now, multiplying the equation (\ref{xk}) by $p_{x_k}|p_{x_k}|^4,$ integrating the resulting equation over $\Omega$ and summing  over $k$, we get that
	\begin{equation*}
		\begin{split}
			\frac{d}{dt}\|\nabla p\|^{6}_{L^{6}(\Omega)}
			&\leqslant  C\intO |\nabla \uu||\nabla p|^6\,dx+\intO p|\nabla \divv \uu||\nabla p|^{5}\,dx\\
			& \leqslant  C \big( \|\nabla \uu\|_{L^{\infty}(\Omega)}\|\nabla p\|_{L^6 (\Omega)}^6+\|p\|_{L^\infty(\Omega)} \|\nabla \divv \uu\|_{L^6(\Omega)}\|\nabla p\|_{L^6 (\Omega)}^{5}\big).
		\end{split}
	\end{equation*}
	So, \eqref{3dlg-E15} is deduced from Lemma \ref{L3.2}. By similar way, multiplying the equation (\ref{xk}) by $p_{x_k},$ one arrives at
	\begin{equation*}
		\frac{d}{dt}\|\nabla p\|_{L^2(\Omega)}^{2}
		\leqslant  C( \|\nabla \uu\|_{L^\infty(\Omega)}+1 )\|\nabla p\|_{L^2(\Omega)}^{2}+\varepsilon\|\nabla^2 \uu\|_{L^2 (\Omega)}^{2}.
	\end{equation*}
	
	In order to prove \eqref{Lp-ch}, one has to show the $L^p-$estimates on the elliptic operator associated to the system based on the results ststed in Lemma \ref{lem3}. Indeed, we write the momentum equation as follows
	\begin{equation}\label{lp-prob}
		-\mathcal{A}(\uu^*,\D)\uu=-\rho \uu_t-\rho (\uu\cdot\nabla \uu)-\nabla p+(\mathcal{A}(\uu,\D)\uu-\mathcal{A}(\uu^*,\D)\uu)
	\end{equation}
	where $\uu^*$ is a reference solution. Investigating \eqref{lp-prob} as a quasi-linear elliptic equation, we know that the linearized operator still yield maximal $L^p-$regularity according to the last statement in Lemma \ref{lem3}. According to the definition of the operator $\mathcal{A}$ in \eqref{defA}, we have that
	\begin{align*}
		{\mathcal A}(\uu,\D)\uu-\mathcal{A}(\uu^*,\D)\uu&=(\beta(|{\mathbb D}{\mathbf u}|^2 +\delta^2)-\beta(|{\mathbb D}{\mathbf u}^*|^2 +\delta^2))(\Delta \uu+\nabla \divv \uu) \\
		&\quad+4\underset{j,k,l=1}{\overset{3}{\sum}}(\beta^\prime(|{\mathbb D}{\mathbf u}|^2 +\delta^2){\mathbb D}_{ik}{\mathbf u}{\mathbb D}_{jl}{\mathbf u}-\beta^\prime(|{\mathbb D}{\mathbf u}^*|^2 +\delta^2){\mathbb D}_{ik}{\mathbf u}^*{\mathbb D}_{jl}{\mathbf u}^*)\del_k \del_l\uu_j,
	\end{align*}
	where $\beta(\cdot)$ was defined in \eqref{def-gamma}. Therefore, we have
	\begin{align*}\label{difA-A*-est}
		\|&\mathcal{A}(\uu,\D)\uu-\mathcal{A}(\uu^{*},\D)\uu\|_{L^{p}(\Omega)}\\
		&\leqslant C\||{\mathbb D}{\mathbf u}|^{2}-|{\mathbb D}{\mathbf u}^*|^{2}\|_{L^{\infty}(\Omega)}\|\uu\|_{W^{2,p}(\Omega)}\\
		&+C\sum_{k,l=1}^{3}\||{\mathbb D}_{ik}{\mathbf u}{\mathbb D}_{jl}{\mathbf u}-{\mathbb D}_{ik}{\mathbf u}^*{\mathbb D}_{jl}{\mathbf u}^*|\|_{L^{\infty}(\Omega)}\|\uu\|_{W^{2,p}(\Omega)}\\
		&+4C\sum_{k,l=1}^{3}\|(|{\mathbb D}{\mathbf u}|^{2}-|{\mathbb D}{\mathbf u}^*|^{2}){\mathbb D}_{ik}{\mathbf u}^*{\mathbb D}_{jl}{\mathbf u}^*\|_{L^{\infty}(\Omega)}\|\uu\|_{W^{2,p}(\Omega)}\\
		&\leqslant C\||{\mathbb D}{\mathbf u}|-|{\mathbb D}{\mathbf u}^*|\|_{L^{\infty}(\Omega)}\||{\mathbb D}{\mathbf u} |+|{\mathbb D}{\mathbf u}^*|\|_{L^{\infty}(\Omega)}\|\uu\|_{W^{2,p}(\Omega)}\\
		&+C\sum_{k,l=1}^{3}(|{\mathbb D}_{ik}{\mathbf u}|_{L^{\infty}(\Omega)}\||{\mathbb D}_{jl}{\mathbf u}|-|{\mathbb D}_{jl}{\mathbf u}^*|\|_{L^{\infty}(\Omega)}\\
		&+|{\mathbb D}_{jl}{\mathbf u}^*|_{L^{\infty}(\Omega)}\||{\mathbb D}_{ik}{\mathbf u}|-|{\mathbb D}_{ik}{\mathbf u}^*|\|_{L^{\infty}(\Omega)})\|\uu\|_{W^{2,p}(\Omega)}\\
		&+\||{\mathbb D}{\mathbf u}|-|{\mathbb D}{\mathbf u}^*|\|_{L^{\infty}(\Omega)}\|{\mathbb D}_{ik}{\mathbf u}^*{\mathbb D}_{jl}{\mathbf u}^*\|_{L^{\infty}(\Omega)}\||{\mathbb D}{\mathbf u} |+|{\mathbb D}{\mathbf u}^*|\|_{L^{\infty}(\Omega)}\|\uu\|_{W^{2,p}(\Omega)}\\
		&\leqslant C\|\nabla(\uu-\uu^{*})\|_{L^{\infty}(\Omega)}\|\uu\|_{W^{2,p}(\Omega)}.	
	\end{align*}
	Thus, one can choose the reference solution $\uu^*$ of \eqref{lp-prob} such that $\uu$ represents a small perturbation of $\uu^*$, and deduces from \eqref{lp-prob} that
	\begin{equation}\label{3dlg-E19}
		\|\uu \|_{W^{2,p}(\Omega)}\leqslant C \|-\rho \uu_t-\rho (\uu\cdot\nabla \uu)-\nabla p\|_{L^{p}(\Omega)}+\varepsilon\|\uu\|_{W^{2,p}(\Omega)}.
	\end{equation}
	So,
	\begin{equation*}
		\begin{split}
			\|\uu\|_{W^{2,6}(\Omega)}&\leqslant C(\|\rho \uu_{t}\|_{L^{6}(\Omega)}+\|\rho(\uu\cdot\nabla \uu)\|_{L^{6}(\Omega)}+\|\nabla p\|_{L^{6}(\Omega)})\\
			&\leqslant C(\|\rho\|_{L^{\infty}(\Omega)}\|\nabla \uu_{t}\|_{L^{2}(\Omega)}+\|\rho\|_{L^{\infty}(\Omega)}\|\nabla \uu\|_{L^{6}(\Omega)}^{2}+\|\nabla p\|_{L^{6}(\Omega)})\\
			&\leqslant C(\|\nabla \uu_{t}\|_{L^{2}(\Omega)}+\|\nabla \uu\|^{2}_{H^{1}(\Omega)}+\|\nabla p\|_{L^{6}(\Omega)})\\
			&\leqslant C(1+\|\nabla \uu\|^{2}_{H^{1}(\Omega)}+\|\nabla p\|_{L^{6}(\Omega)})+\varepsilon\|\nabla \uu_{t}\|_{L^{2}(\Omega)}^2.
		\end{split}
	\end{equation*}
	The proof of Lemma \ref{L3.6} is completed.
\end{proof}

\begin{lemma}\label{L3.7}
	Under the condition of Theorem \ref{theo1} and \eqref{counter},  there exists a positive constant $C$ such that
	\begin{equation}\label{GI}
		\sup_{0\leq t \leq T}\left(\|\sqrt{p}(\operatorname{div}\uu)\|_{L^{2}(\Omega)}^{2}+\|\sqrt{\rho}\uu_{t}\|_{L^{2}(\Omega)}^{2}+\|\nabla p\|_{L^{2}(\Omega)}^{2}+\|\nabla p\|_{L^{6}(\Omega)}+\|\nabla \uu\|_{L^{2}(\Omega)}^{2}\right)\leqslant C,
	\end{equation}
	and $\|\uu\|_{W^{2,6}(\Omega)}$ is almost everywhere finite on $[0,T]$ for any~$T\in (0,T^*).$
\end{lemma}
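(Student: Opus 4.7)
The strategy is to form a single composite energy quantity, derive a Gronwall-type differential inequality for it by combining the lemmas just established, and close the resulting inequality using the standing hypothesis $\int_0^T\|\nabla\uu\|_{L^\infty}^4\,dt\leqslant M$. Specifically, I set
\[
\Phi(t) := \|\sqrt{\rho}\uu_t\|_{L^2(\Omega)}^2 + \|\sqrt{p}\,\divv \uu\|_{L^2(\Omega)}^2 + \|\nabla p\|_{L^2(\Omega)}^2 + \|\nabla \uu\|_{L^2(\Omega)}^2,
\]
and sum \eqref{tidu u}, \eqref{tidu ut} and \eqref{3dlg-E15-1} to obtain
\[
\Phi'(t) + c\|\sqrt{\rho}\uu_t\|_{L^2}^2 + c\|\nabla \uu_t\|_{L^2}^2 \;\leqslant\; C\bigl(\|\nabla \uu\|_{L^\infty}^4 + 1\bigr)\bigl(1 + \Phi(t)\bigr) + C\varepsilon\bigl(\|\nabla^2 \uu\|_{L^2}^2 + \|\nabla \uu_t\|_{L^2}^2\bigr) + C,
\]
where for $\varepsilon$ small the $\varepsilon$-multiple of $\|\nabla \uu_t\|_{L^2}^2$ is absorbed into the LHS.

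To eliminate the remaining $\|\nabla^2 \uu\|_{L^2}^2$ on the right, I would view the momentum equation as the elliptic system $-\divv \Ss = f$ with $f := -\rho \uu_t - \rho(\uu\cdot\nabla)\uu - \nabla p$, and apply the $H^2$-bullet of Lemma \ref{lem3} to get $\|\nabla^2 \uu\|_{L^2}^2 \leqslant C\|f\|_{L^2}^2$. Using Lemma \ref{L3.2} and controlling the convective term by $\|\rho(\uu\cdot\nabla)\uu\|_{L^2} \leqslant C\|\nabla \uu\|_{L^\infty}\|\uu\|_{L^2}$ together with the weighted inequality $\|\uu\|_{L^2} \leqslant C(\|\sqrt{\rho}\uu\|_{L^2} + \|\nabla \uu\|_{L^2})$ (valid on the torus thanks to mass conservation), I arrive at
\[
\|\nabla^2 \uu\|_{L^2}^2 \;\leqslant\; C\bigl(1 + \|\nabla \uu\|_{L^\infty}^2\bigr)\bigl(1 + \Phi(t)\bigr).
\]
Plugging this back and fixing $\varepsilon$ once and for all yields the clean inequality
\[
\Phi'(t) + c\bigl(\|\nabla \uu_t\|_{L^2}^2 + \|\nabla^2 \uu\|_{L^2}^2\bigr) \;\leqslant\; C\bigl(\|\nabla \uu\|_{L^\infty}^4 + 1\bigr)\bigl(1+\Phi(t)\bigr).
\]

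Since $\int_0^T(\|\nabla\uu\|_{L^\infty}^4 + 1)\,dt \leqslant M + T^*$ is finite by \eqref{counter}, one Gronwall application produces $\sup_{[0,T]}\Phi(t) \leqslant C$ and $\int_0^T(\|\nabla\uu_t\|_{L^2}^2 + \|\nabla^2\uu\|_{L^2}^2)\,dt \leqslant C$; this already delivers all the $L^2$-type bounds in \eqref{GI}. To upgrade $\nabla p$ to $L^6$, I would combine \eqref{3dlg-E15} with \eqref{Lp-ch} and the inequality $\|\nabla^2\uu\|_{L^6} \leqslant \|\uu\|_{W^{2,6}}$ to obtain
\[
\frac{d}{dt}\|\nabla p\|_{L^6} \;\leqslant\; C\bigl(\|\nabla\uu\|_{L^\infty}+1\bigr)\|\nabla p\|_{L^6} + C\bigl(1 + \|\nabla\uu\|_{H^1}^2 + \|\nabla\uu_t\|_{L^2}^2\bigr).
\]
All ingredients on the right other than $\|\nabla p\|_{L^6}$ itself are now in $L^1(0,T)$ (noting $\int_0^T\|\nabla\uu\|_{L^\infty}dt\leqslant T^{3/4}M^{1/4}$), so one final Gronwall argument yields $\sup_{[0,T]}\|\nabla p\|_{L^6} \leqslant C$. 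The almost-everywhere finiteness of $\|\uu\|_{W^{2,6}}$ on $[0,T]$ follows instantaneously from \eqref{Lp-ch}, whose right-hand side is then an $L^1(0,T)$ function.

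The principal difficulty I expect is the self-consistent elliptic closure: the $\|\nabla^2\uu\|_{L^2}$ estimate coming out of Lemma \ref{lem3} must be tight enough to absorb the $\varepsilon$-terms produced by three separate lemmas, and the convective term $\rho(\uu\cdot\nabla)\uu$ must be handled without generating a power of $\Phi$ higher than the first, so that Gronwall actually closes under the sole $L^4_t L^\infty_x$ hypothesis \eqref{counter} on $\nabla\uu$.
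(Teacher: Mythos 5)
Your proposal follows essentially the same route as the paper: close the $\|\nabla^2\uu\|_{L^2}$ term via the elliptic estimate of Lemma~\ref{lem3}, combine the differential inequalities from Lemmas~\ref{L3.3}, \ref{L3.5}, \ref{L3.6} with \eqref{Lp-ch}, and apply Gronwall under the hypothesis \eqref{counter}. The only cosmetic difference is that you split the argument into two Gronwall passes (first the $L^2$-level energy $\Phi$, then a bootstrap for $\|\nabla p\|_{L^6}$), whereas the paper bundles everything, including $\|\nabla p\|_{L^6}$ and $\|\uu\|_{W^{2,6}}$, into a single composite differential inequality \eqref{dt}; also note you can avoid the Poincar\'e-type inequality $\|\uu\|_{L^2}\leqslant C(\|\sqrt\rho\uu\|_{L^2}+\|\nabla\uu\|_{L^2})$ by bounding $\|\rho(\uu\cdot\nabla)\uu\|_{L^2}\leqslant\|\rho\|_{L^\infty}^{1/2}\|\nabla\uu\|_{L^\infty}\|\sqrt\rho\uu\|_{L^2}$ directly, as the paper does.
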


\begin{proof}
	By virtue of Lemma \ref{lem3}, we get
	\begin{align}\label{tidu2}
		\|\nabla^{2}\uu\|_{L^{2}(\Omega)}^{2}&\leqslant C(\|\rho \uu_{t}\|_{L^{2}(\Omega)}^{2}+\|\rho(\uu\cdot\nabla) \uu\|_{L^{2}(\Omega)}^{2}+\|\nabla p\|_{L^{2}(\Omega)}^{2})\nonumber\\
		&\leqslant C(\|\sqrt{\rho}\uu_{t}\|_{L^{2}(\Omega)}^{2}+\|\sqrt{\rho}\uu\|_{L^{2}(\Omega)}^{2}\|\nabla \uu\|_{L^{\infty}(\Omega)}^{2}+\|\nabla p\|_{L^{2}(\Omega)}^{2})\nonumber\\
		&\leqslant C(\|\sqrt{\rho}\uu_{t}\|_{L^{2}(\Omega)}^{2}+\|\nabla \uu\|_{L^{\infty}(\Omega)}^{2}+\|\nabla p\|_{L^{2}(\Omega)}^{2}).
	\end{align}
	So,
	\begin{equation}\label{H1}
		\|\nabla \uu\|_{H^{1}(\Omega)}^{2}\leqslant C(\|\sqrt{\rho}\uu_{t}\|_{L^{2}(\Omega)}^{2}+\|\nabla \uu\|_{L^{\infty}(\Omega)}^{2}+\|\nabla p\|_{L^{2}(\Omega)}^{2})+\|\nabla \uu\|_{L^{2}(\Omega)}^{2}.
	\end{equation}
	Gathering  \eqref{tidu u}, \eqref{tidu ut}, \eqref{3dlg-E15}, \eqref{3dlg-E15-1}, \eqref{tidu2} and \eqref{H1}, and multiplying \eqref{Lp-ch} by some appropriate coefficient, we deduce that
	\begin{align}\label{dt}
		&\frac{d}{dt}\left(\int_{\Omega}(\rho|\uu_{t}|^{2}+p(\operatorname{div}\uu)^{2})dx+\|\nabla p\|_{L^{2}(\Omega)}^{2}+\|\nabla p\|_{L^{6}(\Omega)}+\|\nabla \uu\|_{L^{2}(\Omega)}^{2}\right)\nonumber\\
		&\quad+\|\nabla \uu_{t}\|_{L^{2}(\Omega)}^{2}+\|\nabla^{2}\uu\|_{L^{2}(\Omega)}^{2}+\|\uu\|_{W^{2,6}(\Omega)}\\
		&\leqslant C(\|\nabla \uu\|_{L^{\infty}(\Omega)}^{4}+1)(1+\|\sqrt{\rho}\uu_{t}\|_{L^{2}(\Omega)}^{2}+\|\nabla p\|_{L^{2}(\Omega)}^{2}+\|\nabla p\|_{L^{6}(\Omega)}+\|\nabla \uu\|_{L^{2}(\Omega)}^{2}).\nonumber
	\end{align}
	Using Gronwall inequality and estimate \eqref{counter}, we deduce that
	\begin{eqnarray}\label{GI}
		&&\quad\|\sqrt{p}(\operatorname{div}\uu)\|_{L^{2}(\Omega)}^{2}+\|\sqrt{\rho}\uu_{t}\|_{L^{2}(\Omega)}^{2}+\|\nabla p\|_{L^{2}(\Omega)}^{2}+\|\nabla p\|_{L^{6}(\Omega)}+\|\nabla \uu\|_{L^{2}(\Omega)}^{2}\nonumber\\
		&&\leqslant C(\rho_{0},\uu_{0})exp\left(\int_{0}^{t}(1+\|\nabla \uu\|_{L^{\infty}(\Omega)}^{4})ds\right)\leqslant C.
	\end{eqnarray}
	Furthermore, one integrates \eqref{dt} over $(0,T)$ and deduces from \eqref{counter}, \eqref{Lp-ch} and \eqref{GI} that
	\begin{equation}\label{26}
		\int_0^T \|\uu\|_{W^{2,6}(\Omega)}\,dt\leqslant C.
	\end{equation}
	So, we can get
	$\|\uu\|_{W^{2,6}(\Omega)}$ is almost everywhere finite on $[0,T].$ The proof of Lemma \ref{L3.7} is completed.
\end{proof}

Note that the functions $(\rho,\uu)(x,T^{*})\triangleq\lim\limits_{t\rightarrow T^{*}}(\rho,\uu) $ satisfy
the conditions imposed on the initial data \eqref{1dlg-E7} at the time $t=T^{*}$. Furthermore,
$$-\mu\triangle \uu-(\lambda+\mu)\nabla\operatorname{div}\uu+\nabla p-\tau^{*}(( |{\mathbb D}{\mathbf u}|^2 +\delta^2)^{\frac{q-2}{2}}\mathbb{ D}\uu)\mid_{t=T^{*}}=\lim\limits_{t\rightarrow T^{*}}\rho^{\frac{1}{2}}(x,T^{*})g(x)$$
with $g(x)\triangleq\lim\limits_{t\rightarrow T^{*}}\big(\rho^{\frac{1}{2}}(\uu_{t}+\uu\cdot\nabla \uu)\big)(x,t)\in L^{2}(\Omega)$. Thus, $(\rho,\uu)(x,T^{*}) $ satisfies \eqref{1dlg-E8} also.
Now, one takes $(\rho,\uu)(x,T^{*}) $ as the initial data and applying Lemma \ref{lem1} to extend the local strong solution beyond $T^{*}.$  This is a contradiction and the proof of Theorem \ref{theo1} is completed.

\section{Proof of Theorem \ref{theo2}}\label{Sec-The2}
Let $(\rho,u)$ be a strong solution to the problem \eqref{1dlg-E2}-\eqref{initial value} as described in Theorem \ref{theo2}. Under the condition of Theorem \ref{theo1}, the standard energy estimate \eqref{energy} is holds.

Suppose that there exists some constant $M>0 $ such that
\begin{equation}\label{4dlg-E1}
	\|\rho\|_{L^{\infty}(0,T;L^{\infty}(\Omega))}+\|\nabla \uu\|_{L^{\infty}(0,T;L^{3}(\Omega))}\leqslant M,
\end{equation}
for any $0<T<T^*$.

\begin{lemma}\label{L4.1}
	Under the condition of Theorem \ref{theo2} and \eqref{4dlg-E1},  there exists a positive constant $C$ such that
	\begin{equation}\label{2tidu ut}
			\quad\frac{d}{dt}\int_{\Omega}\left(\rho|\uu_{t}|^{2}+p(\operatorname{div}\uu)^{2}\right)dx+\int_{\Omega}|\nabla \uu_{t}|^{2}\,dx\leqslant C\left(1+\|\nabla p\|_{L^{3}(\Omega)}^{2}+\|\nabla \uu\|_{H^{1}(\Omega)}^{2}+\|\sqrt{\rho} \uu_{t}\|_{L^{2}(\Omega)}^{2}\right).
	\end{equation}
\end{lemma}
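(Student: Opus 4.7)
The plan is to mirror the proof of Lemma \ref{L3.5}: differentiate the momentum equation $\eqref{1dlg-E2}_2$ with respect to $t$, test the result against $\uu_t$, integrate over $\Omega$, and use the continuity equation written as \eqref{transform} to convert $-\int \nabla p_t \cdot \uu_t\,dx$ (arising from integration by parts) into $\frac{d}{dt}\int \frac{\gamma}{2} p(\operatorname{div}\uu)^2\,dx$ plus lower order terms. This produces exactly the identity \eqref{26ch}, with the viscous dissipation $\mu\|\nabla\uu_t\|_{L^2}^2 + (\lambda+\mu)\|\operatorname{div}\uu_t\|_{L^2}^2$ plus the nonnegative monotone $\tau^\ast$-contribution on the LHS, and the same eight terms $I_1,\dots,I_8$ on the RHS.

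The real work is to re-estimate $I_1,\dots,I_8$ under the weaker assumption \eqref{4dlg-E1}, which gives $\|\rho\|_{L^\infty}\le M$ and $\|\nabla\uu\|_{L^3}\le M$ but no $L^\infty$ control on $\nabla\uu$. The ingredients I would use are:
\begin{itemize}
\item $\|\uu\|_{H^1(\Omega)}\le C$ and hence $\|\uu\|_{L^6(\Omega)}\le C$, obtained by combining $\|\nabla\uu\|_{L^2}\le|\Omega|^{1/6}\|\nabla\uu\|_{L^3}\le C$ with the weighted Poincar\'e inequality $\|\uu\|_{L^2}^2\le C(\|\sqrt{\rho}\uu\|_{L^2}^2+\|\nabla\uu\|_{L^2}^2)$ (mass conservation on $\mathbb T^3$ yields $\int\rho\ge m_0>0$);
\item $\|\nabla\uu\|_{L^6(\Omega)}\le C\|\nabla\uu\|_{H^1(\Omega)}$ by Sobolev embedding;
\item $\|\uu_t\|_{L^6(\Omega)}\le C(\|\nabla\uu_t\|_{L^2(\Omega)}+\|\sqrt{\rho}\uu_t\|_{L^2(\Omega)})$, by Sobolev plus the same weighted Poincar\'e applied to $\uu_t$.
\end{itemize}
With these, the cubic and quartic terms split easily: $|I_1|\le\|p\|_{L^\infty}\|\nabla\uu\|_{L^3}^3\le C$; $|I_2|$ splits as $(\infty,6,3,2)$ giving $C\|\nabla^2\uu\|_{L^2}\le C+C\|\nabla\uu\|_{H^1}^2$; $|I_3|$ splits as $(3,6,2)$ giving $\|\nabla p\|_{L^3}\|\uu\|_{L^6}\|\nabla\uu_t\|_{L^2}\le\varepsilon\|\nabla\uu_t\|_{L^2}^2+C\|\nabla p\|_{L^3}^2$; and $I_5,I_6,I_7$ split as $(\infty,6,6,6,2)$ and yield $\varepsilon\|\nabla\uu_t\|_{L^2}^2+C\|\nabla\uu\|_{H^1}^2+C\|\sqrt\rho\uu_t\|_{L^2}^2$ after using the bound on $\|\uu_t\|_{L^6}$.

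The main obstacle is $I_4=\int\rho|\uu||\uu_t||\nabla\uu_t|\,dx$ and, to a lesser extent, $I_8=\int\rho|\uu_t|^2|\nabla\uu|\,dx$. In Lemma \ref{L3.5} these were bounded using $\|\uu\|_{L^\infty}$ and $\|\nabla\uu\|_{L^\infty}$ respectively; neither is available here. The trick I would use is to keep one factor of $\sqrt{\rho}$ attached to $\uu_t$ and apply H\"older with exponents $(6,3,2)$:
\[
|I_4|\le\|\sqrt{\rho}\|_{L^\infty}\|\uu\|_{L^6}\|\sqrt{\rho}\uu_t\|_{L^3}\|\nabla\uu_t\|_{L^2},\qquad
|I_8|\le\|\sqrt{\rho}\uu_t\|_{L^2}\|\sqrt{\rho}\uu_t\|_{L^6}\|\nabla\uu\|_{L^3}.
\]
Then interpolate $\|\sqrt{\rho}\uu_t\|_{L^3}^2\le\|\sqrt{\rho}\uu_t\|_{L^2}\|\sqrt{\rho}\uu_t\|_{L^6}$ and use $\|\sqrt{\rho}\uu_t\|_{L^6}\le M^{1/2}\|\uu_t\|_{L^6}\le C(\|\nabla\uu_t\|_{L^2}+\|\sqrt{\rho}\uu_t\|_{L^2})$. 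Young's inequality (with appropriate exponents for the $3/2$-power of $\|\nabla\uu_t\|_{L^2}$ in $I_4$) then delivers a bound of the form $\varepsilon\|\nabla\uu_t\|_{L^2}^2+C\|\sqrt{\rho}\uu_t\|_{L^2}^2$ for both terms, without any uncontrolled $\|\nabla\uu_t\|_{L^2}^2$ residual.

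Summing the eight estimates, choosing $\varepsilon>0$ small enough to absorb $\sum\varepsilon\|\nabla\uu_t\|_{L^2}^2$ into the viscous term on the LHS, and using $\|p\|_{L^\infty}\le C$ together with $\|\nabla p\|_{L^3}\ge 0$, one arrives at the claimed inequality \eqref{2tidu ut}. The choice of a $\sqrt{\rho}$-factor as the bookkeeping device (rather than trying to separately bound $\|\uu_t\|_{L^2}$) is what makes the weaker hypothesis \eqref{4dlg-E1} compatible with a clean absorption argument.
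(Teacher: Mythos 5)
Your proposal is correct and takes essentially the same approach as the paper: start from the identity \eqref{26ch}, re-estimate $I_1,\dots,I_8$ using $\|\uu\|_{L^6}\le C\|\nabla\uu\|_{L^2}\le C$, $\|\nabla\uu\|_{L^6}\le C\|\nabla\uu\|_{H^1}$, $\|\uu_t\|_{L^6}\le C\|\nabla\uu_t\|_{L^2}$ (plus a compatible term), and absorb the small $\varepsilon\|\nabla\uu_t\|_{L^2}^2$ contributions into the viscous term. For the critical terms $I_4$ and $I_8$, the paper does the same thing you do — keeps a $\sqrt{\rho}$ factor on $\uu_t$, interpolates $\|\sqrt{\rho}\uu_t\|_{L^3}\le\|\sqrt{\rho}\uu_t\|_{L^2}^{1/2}\|\sqrt{\rho}\uu_t\|_{L^6}^{1/2}$ (in the paper's notation this appears as the $\rho^{3/4}$--$\rho^{1/4}$ split with $\|\sqrt{\rho}\uu_t\|_{L^2}^{1/2}\|\uu_t\|_{L^6}^{1/2}$), and closes with Young's inequality on the $3/2$-power of $\|\nabla\uu_t\|_{L^2}$. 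The only cosmetic difference is the exact H\"older exponent bookkeeping; the resulting bound $|I_4|+|I_8|\le C\|\sqrt{\rho}\uu_t\|_{L^2}^2+\varepsilon\|\nabla\uu_t\|_{L^2}^2$ is identical.
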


\begin{proof}
	Taking same argument as \eqref{26ch}, one gets that
	\begin{align}\label{26ch-1}
		\frac{d}{dt}&\intO \left(\frac{1}{2}\rho|\uu_t|^2+\frac{\gamma}{2}p (\divv \uu)^2 \right)\,dx+\mu \intO |\nabla \uu_t|^2\,dx+(\lambda+\mu)\intO|\divv \uu_t|^2\,dx\nonumber\\
		&\quad\quad-\tau^*\int_{\Omega}\uu_t\cdot{\rm div}\Big(( |{\mathbb D}{\mathbf u}|^2 +\delta^2)^{\frac{q-2}{2}}{\mathbb D}{\mathbf u}\Big)_t
		\,dx \nonumber\\
		& \leqslant C\intO\Big(p |\nabla \uu|^3+p |\uu||\nabla \uu||\nabla^2 \uu|+|\nabla p||\uu||\nabla \uu_t|+\rho |\uu||\uu_t||\nabla \uu_t|\nonumber\\
		&\quad \quad +\rho |\uu||\uu_t|| \nabla \uu|^2+ \rho |\uu|^2|\uu_t||\nabla^2 \uu|+\rho |\uu|^2|\nabla \uu|| \nabla \uu_t|+\rho |\uu_t|^2|\nabla \uu|\Big)\,dx\nonumber\\
		&:=\sum_{k=1}^{8} I_k.
	\end{align}
	Now, each term on the right hand of (\ref{26ch}) is estimated as follows
	\begin{align*}
		&|I_{1}|\leqslant C\|\rho\|_{L^{\infty}(\Omega)}^{\gamma}\|\nabla \uu\|_{L^{3}(\Omega)}^{3}\leqslant C,\\
		& |I_{2}|\leqslant C\|p\|_{L^{\infty}(\Omega)}\| \uu\|_{L^{6}(\Omega)}\|\nabla \uu\|_{L^{3}(\Omega)}\|\nabla^{2}\uu\|_{L^{2}(\Omega)}\\
		&\hspace{17pt}\leqslant C\|\rho\|_{L^{\infty}(\Omega)}^{\gamma}\|\nabla \uu\|_{L^{2}(\Omega)}\|\nabla \uu\|_{L^{3}(\Omega)}\|\nabla^{2}\uu\|_{L^{2}(\Omega)}\\
		&\hspace{17pt}\leqslant C+C\|\nabla \uu\|_{H^{1}(\Omega)}^{2},\\
		& |I_{3}|\leqslant C\|\nabla p\|_{L^{3}(\Omega)}\|\uu\|_{L^{6}(\Omega)}\|\nabla \uu_{t}\|_{L^{2}(\Omega)}\\
		&\hspace{17pt}\leqslant C\|\nabla p\|_{L^{3}(\Omega)}\|\nabla \uu\|_{L^{2}(\Omega)}\|\nabla \uu_{t}\|_{L^{2}(\Omega)}\\
		&\hspace{17pt}\leqslant C\|\nabla p\|_{L^{3}(\Omega)}^{2}+\varepsilon\|\nabla \uu_{t}\|_{L^{2}(\Omega)}^{2},\\
		&|I_{4}|\leqslant C\|\rho\|_{L^{\infty}(\Omega)}^{\frac{3}{4}}\|\sqrt{\rho}\uu_{t}\|_{L^{2}(\Omega)}^{\frac{1}{2}}\|\uu\|_{L^{6}(\Omega)}\|\nabla \uu_{t}\|_{L^{2}(\Omega)}\|\uu_{t}\|_{L^{6}(\Omega)}^{\frac{1}{2}}\\
		&\hspace{17pt}\leqslant C\|\nabla \uu\|_{L^{2}(\Omega)}\|\sqrt{\rho}\uu_{t}\|_{L^{2}(\Omega)}^{\frac{1}{2}}\|\nabla \uu_{t}\|_{L^{2}(\Omega)}^{\frac{3}{2}}\\
		&\hspace{17pt}\leqslant C\|\sqrt{\rho}\uu_{t}\|_{L^{2}(\Omega)}^{2}+\varepsilon\|\nabla \uu_{t}\|_{L^{2}(\Omega)}^{2},\\
		& |I_{5}|\leqslant C\|\rho\|_{L^{\infty}(\Omega)}\|\uu\|_{L^{6}(\Omega)}\| \uu_{t}\|_{L^{6}(\Omega)}\|\nabla \uu\|_{L^{3}(\Omega)}^{2}\\
		&\hspace{17pt}\leqslant C\|\nabla \uu\|_{L^{2}(\Omega)}\|\nabla \uu_{t}\|_{L^{2}(\Omega)}\|\nabla \uu\|_{L^{3}(\Omega)}^{2}\leqslant C+\varepsilon\|\nabla \uu_{t}\|_{L^{2}(\Omega)}^{2},\\
		&|I_{6}|\leqslant C\|\rho\|_{L^{\infty}(\Omega)}\|\uu\|_{L^{6}(\Omega)}^{2}\| \uu_{t}\|_{L^{6}(\Omega)}\|\nabla^{2}\uu\|_{L^{2}(\Omega)}\\
		&\hspace{17pt}\leqslant C\|\nabla \uu\|_{L^{2}(\Omega)}^{2}\|\nabla \uu_{t}\|_{L^{2}(\Omega)}\|\nabla \uu\|_{H^{1}(\Omega)}\\
		&\hspace{17pt}\leqslant C\|\nabla \uu\|_{H^{1}(\Omega)}^{2}+\varepsilon\|\nabla \uu_{t}\|_{L^{2}(\Omega)}^{2},\\
		&|I_{7}|\leqslant C\|\rho\|_{L^{\infty}(\Omega)}\|\uu\|_{L^{6}(\Omega)}^{2}\|\nabla \uu\|_{L^{6}(\Omega)}\|\nabla \uu_{t}\|_{L^{2}(\Omega)}\\
		&\hspace{17pt}\leqslant C\|\nabla \uu\|_{L^{2}(\Omega)}^{2}\|\nabla \uu\|_{H^{1}(\Omega)}\|\nabla \uu_{t}\|_{L^{2}(\Omega)}\\
		&\hspace{17pt}\leqslant C\|\nabla \uu\|_{H^{1}(\Omega)}^{2}+\varepsilon\|\nabla \uu_{t}\|_{L^{2}(\Omega)}^{2},\\
		&|I_{8}|\leqslant C\|\rho\|_{L^{\infty}(\Omega)}^{\frac{1}{2}}\|\uu_{t}\|_{L^{6}(\Omega)}\|\sqrt{\rho} \uu_{t}\|_{L^{2}(\Omega)}\|\nabla \uu\|_{L^{3}(\Omega)}\\
		&\hspace{17pt}\leqslant C\|\nabla \uu_{t}\|_{L^{2}(\Omega)}\|\sqrt{\rho} \uu_{t}\|_{L^{2}(\Omega)}\\
		&\hspace{17pt}\leqslant C\|\sqrt{\rho} \uu_{t}\|_{L^{2}(\Omega)}^{2}+\varepsilon\|\nabla \uu_{t}\|_{L^{2}(\Omega)}^{2}.
	\end{align*}
	for any fixed $\varepsilon\in (0,1).$ So, it is deduced from \eqref{26ch-1} that
	\begin{equation*}
		\frac{d}{dt}\int_{\Omega}\left(\rho|\uu_{t}|^{2}+p(\operatorname{div}\uu)^{2}\right)dx+\int_{\Omega}|\nabla \uu_{t}|^{2}dx\leqslant C\left(1+\|\nabla p\|_{L^{3}(\Omega)}^{2}+\|\nabla \uu\|_{H^{1}(\Omega)}^{2}+\|\sqrt{\rho} \uu_{t}\|_{L^{2}(\Omega)}^{2}\right).
	\end{equation*}
	This completes the proof of Lemma \ref{L4.1}.
\end{proof}

\begin{lemma}\label{L4.2}
	Under the condition of Theorem \ref{theo2} and \eqref{4dlg-E1},  there exists a positive constant $C$ such that
	\begin{equation}\label{2tidu2u}
		\|\nabla^{2}\uu\|_{L^{2}(\Omega)}^{2}\leqslant C(\|\sqrt{\rho}\uu_{t}\|_{L^{2}(\Omega)}^{2}+\|\nabla p\|_{L^{2}(\Omega)}^{2}+1).
	\end{equation}
\end{lemma}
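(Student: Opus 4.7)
The plan is to rewrite the momentum equation $\eqref{1dlg-E2}_{2}$ in the form
\[
-\divv \Ss = -\rho \uu_{t} - \rho(\uu\cdot\nabla)\uu - \nabla p,
\]
and then apply the second bullet of Lemma \ref{lem3} (the $H^{2}$ estimate for the nonlinear elliptic system) with right-hand side $f := -\rho \uu_{t} - \rho(\uu\cdot\nabla)\uu - \nabla p$. Since $\Omega=\mathbb{T}^{3}$, the quantity $\|\nabla\curl\uu\|_{L^{2}}^{2}+\|\nabla\divv\uu\|_{L^{2}}^{2}$ controls $\|\nabla^{2}\uu\|_{L^{2}}^{2}$, so this yields
\[
\|\nabla^{2}\uu\|_{L^{2}(\Omega)}^{2}\leqslant C\bigl(\|\rho \uu_{t}\|_{L^{2}(\Omega)}^{2}+\|\rho(\uu\cdot\nabla)\uu\|_{L^{2}(\Omega)}^{2}+\|\nabla p\|_{L^{2}(\Omega)}^{2}\bigr).
\]
This is essentially the same starting step as in \eqref{tidu2}; the only difference here is the absence of any $L^{\infty}$-control on $\nabla \uu$, so the convective term has to be handled differently.

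Next, I would bound the three terms on the right. For the first, $\|\rho\|_{L^{\infty}}\leqslant M$ from \eqref{4dlg-E1} gives immediately $\|\rho \uu_{t}\|_{L^{2}}^{2}\leqslant M\,\|\sqrt{\rho}\uu_{t}\|_{L^{2}}^{2}$. The third term stays as it is. The convective term is the one that needs genuine work: I would write
\[
\|\rho(\uu\cdot\nabla)\uu\|_{L^{2}}\leqslant \|\rho\|_{L^{\infty}(\Omega)}\,\|\uu\|_{L^{6}(\Omega)}\,\|\nabla\uu\|_{L^{3}(\Omega)}\leqslant CM\,\|\uu\|_{L^{6}(\Omega)},
\]
where the $L^{3}$ norm of $\nabla\uu$ is absorbed by the running assumption \eqref{4dlg-E1}. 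The issue is then bounding $\|\uu\|_{L^{6}}$ uniformly in time in the presence of vacuum, since an $L^{2}$ control of $\uu$ itself does not follow directly from the energy estimate when $\rho$ may vanish.

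The main obstacle is thus this uniform bound on $\|\uu\|_{L^{6}}$. I would get it from the weighted Poincaré-Sobolev inequality available on the torus as soon as the total mass $\int_{\Omega}\rho\,dx=\int_{\Omega}\rho_{0}\,dx>0$ is conserved, namely
\[
\|\uu\|_{L^{6}(\Omega)}\leqslant C\bigl(\|\nabla \uu\|_{L^{2}(\Omega)}+\|\sqrt{\rho}\,\uu\|_{L^{2}(\Omega)}\bigr).
\]
Both terms on the right are uniformly bounded: $\|\sqrt{\rho}\,\uu\|_{L^{2}}\leqslant C$ by the basic energy identity \eqref{energy} from Lemma \ref{L3.1}, and $\|\nabla \uu\|_{L^{2}}\leqslant |\Omega|^{1/6}\|\nabla \uu\|_{L^{3}}\leqslant CM$ by Hölder and \eqref{4dlg-E1}. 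Hence $\|\uu\|_{L^{6}(\Omega)}\leqslant C$, which gives $\|\rho(\uu\cdot\nabla)\uu\|_{L^{2}}^{2}\leqslant C$. Putting everything together,
\[
\|\nabla^{2}\uu\|_{L^{2}(\Omega)}^{2}\leqslant C\bigl(\|\sqrt{\rho}\,\uu_{t}\|_{L^{2}(\Omega)}^{2}+\|\nabla p\|_{L^{2}(\Omega)}^{2}+1\bigr),
\]
which is exactly \eqref{2tidu2u}. The rest of the argument is essentially a recombination of estimates already proved in Section \ref{Sec-P} and earlier in Section \ref{Sec-The1}.
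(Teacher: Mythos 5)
Your proposal is correct and follows essentially the same route as the paper: apply the second bullet of Lemma \ref{lem3} to the momentum equation viewed as $-\divv\Ss = -\rho\uu_t - \rho(\uu\cdot\nabla)\uu - \nabla p$, use the fact that on the torus $\|\nabla\curl\uu\|_{L^2}^2+\|\nabla\divv\uu\|_{L^2}^2$ controls $\|\nabla^2\uu\|_{L^2}^2$, then estimate the three terms with $\|\rho\|_{L^\infty}\leqslant M$, $\|\nabla\uu\|_{L^3}\leqslant M$, and a bound on $\|\uu\|_{L^6}$. The only difference is that you make the $\|\uu\|_{L^6}$ bound explicit via the weighted Poincar\'e--Sobolev inequality $\|\uu\|_{L^6}\leqslant C(\|\nabla\uu\|_{L^2}+\|\sqrt{\rho}\uu\|_{L^2})$, which correctly handles the mean-zero issue in the presence of vacuum on $\mathbb{T}^3$, whereas the paper passes from $\|\uu\|_{L^6}^2\|\nabla\uu\|_{L^3}^2$ to a constant without spelling out the Poincar\'e step (elsewhere in Section \ref{Sec-The2} it silently uses $\|\uu\|_{L^6}\leqslant C\|\nabla\uu\|_{L^2}$). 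Your version is slightly more rigorous on that point but identical in substance.
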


\begin{proof}
	By virtue of Lemma \ref{lem3}, one gets that
	\begin{equation*}
		\begin{aligned}
			\|\nabla^{2}\uu\|_{L^{2}(\Omega)}^{2}&\leqslant C(\|\rho \uu_{t}\|_{L^{2}(\Omega)}^{2}+\|\rho(\uu\cdot\nabla \uu)\|_{L^{2}(\Omega)}^{2}+\|\nabla p\|_{L^{2}(\Omega)}^{2})\\
			&\leqslant C(\|\sqrt{\rho}\uu_{t}\|_{L^{2}(\Omega)}^{2}+\|\uu\|_{L^{6}(\Omega)}^{2}\|\nabla \uu\|_{L^{3}(\Omega)}^{2}+\|\nabla p\|_{L^{2}(\Omega)}^{2})\\
			&\leqslant C(\|\sqrt{\rho}\uu_{t}\|_{L^{2}(\Omega)}^{2}+\|\nabla p\|_{L^{2}(\Omega)}^{2}+1).
		\end{aligned}
	\end{equation*}
	Hence
	\begin{equation}\label{2H}
		\|\nabla \uu\|_{H^{1}(\Omega)}^{2}\leqslant C(\|\sqrt{\rho}\uu_{t}\|_{L^{2}(\Omega)}^{2}+\|\nabla p\|_{L^{2}(\Omega)}^{2}+1).
	\end{equation}
	The proof of Lemma \ref{L4.2} is completed.
\end{proof}

\begin{lemma} \label{L4.3}
	Under the condition of Theorem \ref{theo2} and \eqref{4dlg-E1},  there exists a positive constant $C$ such that
	\begin{equation}\label{bound}
		\sup_{0\leq t \leq T}\left(\|\sqrt{\rho}\uu_{t}\|_{L^{2}(\Omega)}^{2}+\|\nabla p\|_{L^{6}(\Omega)}+\|\sqrt{p}(\operatorname{div}\uu)\|_{L^{2}(\Omega)}^{2}\right)\leqslant C,
	\end{equation}
	and $\|\uu\|_{W^{2,6}(\Omega)}$ is almost everywhere finite on $[0,T]$, for any $T\in(0,\min\{\frac{1}{3Cf^3(0)},T^*\}),$
	with $f(0)\triangleq 1+\|\sqrt{\rho_{0}}(\uu_{0})_{t}\|_{L^{2}(\Omega)}^{2}+\|\nabla p_{0}\|_{L^{6}(\Omega)}+\|\sqrt{p_{0}}(\operatorname{div}\uu_{0})\|_{L^{2}(\Omega)}^{2}.$
\end{lemma}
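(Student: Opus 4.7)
The plan is to combine the evolution inequalities from Lemmas~\ref{L4.1} and~\ref{L3.6} with the elliptic second-order estimate~\eqref{2tidu2u} of Lemma~\ref{L4.2} to derive a closed Riccati-type ODE for the quantity
\[
f(t) := 1 + \|\sqrt{\rho}\uu_t(t)\|_{L^2(\Omega)}^2 + \|\sqrt{p}\operatorname{div}\uu(t)\|_{L^2(\Omega)}^2 + \|\nabla p(t)\|_{L^6(\Omega)}.
\]
The precise form of the threshold $T < 1/(3Cf^3(0))$ suggests aiming for a differential inequality of the type $f'(t) \leq C f(t)^4$, whose explicit solution $f(t) \leq \bigl(f^{-3}(0) - 3Ct\bigr)^{-1/3}$ remains finite exactly on the stated interval.

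First I would add~\eqref{2tidu ut} (which controls $\frac{d}{dt}\!\int(\rho|\uu_t|^2 + p(\operatorname{div}\uu)^2)\,dx + \|\nabla\uu_t\|_{L^2}^2$) to the $L^6$ pressure estimate~\eqref{3dlg-E15} (which controls $\frac{d}{dt}\|\nabla p\|_{L^6}$), producing
\[
f'(t) + \|\nabla \uu_t\|_{L^2}^2 \leq C\bigl(1 + \|\nabla p\|_{L^3}^2 + \|\nabla \uu\|_{H^1}^2 + \|\sqrt{\rho}\uu_t\|_{L^2}^2 + \|\nabla \uu\|_{L^\infty}\|\nabla p\|_{L^6} + \|\nabla^2 \uu\|_{L^6}\bigr).
\]
Next I would bound every right-hand term by a power of $f$. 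The estimate~\eqref{2tidu2u} together with the bounded-domain embedding $L^6 \hookrightarrow L^p$ ($p \leq 6$) gives $\|\nabla \uu\|_{H^1}^2 + \|\nabla p\|_{L^3}^2 \leq C(1 + f^2)$. For the top-order derivatives, the Sobolev embedding $W^{2,6}(\mathbb{T}^3) \hookrightarrow W^{1,\infty}(\mathbb{T}^3)$ in three dimensions gives $\|\nabla \uu\|_{L^\infty} + \|\nabla^2 \uu\|_{L^6} \leq C \|\uu\|_{W^{2,6}}$, and~\eqref{Lp-ch} then yields $\|\uu\|_{W^{2,6}} \leq C(1 + f^2) + \varepsilon\|\nabla \uu_t\|_{L^2}^2$. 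Substituting and applying Young's inequality reduces the right-hand side to a polynomial in $f$ plus a term of the form $C(\varepsilon, f)\|\nabla \uu_t\|_{L^2}^2$ which, once $\varepsilon$ is small enough and $f$ is kept bounded, is absorbed into the coercive left-hand side.

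The resulting differential inequality $f'(t) \leq C f(t)^4$ is integrated by separation of variables: from $-\frac{1}{3}\frac{d}{dt}f^{-3} \leq C$ one deduces $f(t) \leq \bigl(f^{-3}(0) - 3Ct\bigr)^{-1/3}$, which is bounded by a multiple of $f(0)$ throughout the interval $T < 1/(3Cf^3(0))$. Inserting this uniform bound, together with $\int_0^T \|\nabla \uu_t\|_{L^2}^2\,dt < \infty$ (also delivered by the differential inequality), into~\eqref{Lp-ch} then shows that $\|\uu\|_{W^{2,6}}$ lies in $L^1(0,T)$ and is therefore finite almost everywhere on $[0,T]$.

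The main obstacle is the nonlinear cross-term $\|\nabla \uu\|_{L^\infty}\|\nabla p\|_{L^6}$. Once bounded via $\|\uu\|_{W^{2,6}}$ through~\eqref{Lp-ch}, it reintroduces a contribution of the form $\varepsilon f\|\nabla \uu_t\|_{L^2}^2$ that cannot be absorbed by the coercive left-hand side once $f$ becomes large. This is precisely what forces the argument to be purely local in time: one must choose $\varepsilon$ in terms of $f(0)$ and run a bootstrap/continuation argument valid only until $f$ has had time to, say, double, which is the mechanism producing the $f(0)^3$ denominator in the stated threshold.
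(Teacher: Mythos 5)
Your high-level strategy matches the paper exactly: combine \eqref{2tidu ut}, \eqref{2tidu2u}, \eqref{3dlg-E15} and \eqref{Lp-ch} to produce a Riccati-type inequality $f'(t)\leqslant Cf^4(t)$ for the same quantity $f$, then integrate to obtain the finite-time bound on the stated interval. The difficulty you identify — the cross term $\|\nabla \uu\|_{L^\infty(\Omega)}\|\nabla p\|_{L^6(\Omega)}$ — is indeed the crux. But the way you propose to handle it has a genuine gap, and your diagnosis of the threshold's origin is off.

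Bounding $\|\nabla \uu\|_{L^\infty(\Omega)}\leqslant C\|\uu\|_{W^{2,6}(\Omega)}$ and then substituting \eqref{Lp-ch} produces, as you acknowledge, a term of order $\varepsilon f\,\|\nabla \uu_t\|_{L^2(\Omega)}^2$ with an $f$-dependent coefficient in front of the coercive quantity. You propose to resolve this by choosing $\varepsilon$ in terms of $f(0)$ and running a continuation argument. This does not close cleanly: the generic constant $C$ in \eqref{Lp-ch} comes from a Young inequality and scales like $\varepsilon^{-1}$, so making $\varepsilon\sim 1/f(0)$ feeds $f(0)$ back into the Riccati constant $C$, and the stated threshold $T<\frac{1}{3Cf^3(0)}$ with a $C$ independent of the data would not emerge in the advertised form. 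The paper avoids this cascade entirely. In \eqref{u26} it interpolates $\|\nabla \uu\|_{L^\infty(\Omega)}\leqslant C\|\uu\|_{L^6(\Omega)}^{1/4}\|\nabla^2\uu\|_{L^6(\Omega)}^{3/4}$ (Gagliardo--Nirenberg, with $\|\uu\|_{L^6(\Omega)}$ already controlled) and then applies Young with exponents $(4/3,4)$:
\begin{equation*}
\|\nabla \uu\|_{L^\infty(\Omega)}\|\nabla p\|_{L^6(\Omega)}\leqslant \varepsilon\|\nabla^2\uu\|_{L^6(\Omega)}+C\|\nabla p\|_{L^6(\Omega)}^4,
\end{equation*}
where $\varepsilon$ is a fixed universal constant. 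The $\varepsilon\|\nabla^2\uu\|_{L^6(\Omega)}$ is absorbed by the $\|\uu\|_{W^{2,6}(\Omega)}$ that sits on the left after adding a multiple of \eqref{Lp-ch}, and the $\|\nabla p\|_{L^6(\Omega)}^4$ contributes directly to $f^4$. The $f(0)^3$ denominator in the threshold is not the mark of any continuation argument; it is simply the blow-up time of the explicit solution of $f'\leqslant Cf^4$, namely $f(t)\leqslant\bigl(f^{-3}(0)-3Ct\bigr)^{-1/3}$. With the interpolation step in place the rest of your argument (integrating \eqref{u26} to get $\int_0^T\|\uu\|_{W^{2,6}(\Omega)}\,dt<\infty$, hence a.e.\ finiteness) is correct.
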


\begin{proof}
	Taking similar argument in Lemma \ref{L3.6}, gathering \eqref{2tidu ut}, \eqref{2tidu2u} and \eqref{2H}, and multiplying \eqref{Lp-ch} by some appropriate coefficient, one obtains that
	\begin{align}\label{u26}
		&\frac{d}{dt}\left(\int_{\Omega}(\rho|\uu_{t}|^{2}+p(\operatorname{div}\uu)^{2})dx+\|\nabla p\|_{L^{6}(\Omega)}\right)+\|\nabla \uu_{t}\|_{L^{2}(\Omega)}^{2}+\|\nabla^{2}\uu\|_{L^{2}(\Omega)}^{2}+\|\uu\|_{W^{2,6}(\Omega)}\nonumber\\
		&\leqslant C\left(1+\|\sqrt{\rho}\uu_{t}\|_{L^{2}(\Omega)}^{2}+\|\nabla p\|_{L^{6}(\Omega)}^{4}+\|\nabla \uu\|_{L^{\infty}(\Omega)}\|\nabla p\|_{L^{6}(\Omega)}\right)\nonumber\\
		&\leqslant C\left(1+\|\sqrt{\rho}\uu_{t}\|_{L^{2}(\Omega)}^{2}+\|\nabla p\|_{L^{6}(\Omega)}^{4}+\|\uu\|_{L^{6}(\Omega)}^{\frac{1}{4}}\|\nabla^{2}\uu\|_{L^{6}(\Omega)}^{\frac{3}{4}}\|\nabla p\|_{L^{6}(\Omega)}\right)\\
		&\leqslant C\left(1+\|\sqrt{\rho}\uu_{t}\|_{L^{2}(\Omega)}^{2}+\|\nabla p\|_{L^{6}(\Omega)}\right)^{4}+\varepsilon\|\nabla^{2}\uu\|_{L^{6}(\Omega)}\nonumber, 		
	\end{align}
	holds for any fixed $\varepsilon\in(0,1).$
	Hence,
	\begin{equation}\label{2ddt}
		\begin{aligned}
			&\frac{d}{dt}\left(\int_{\Omega}(\rho|\uu_{t}|^{2}+p(\operatorname{div}\uu)^{2})dx+\|\nabla p\|_{L^{6}(\Omega)}\right)\\
			&\leqslant C\left(1+\|\sqrt{\rho}\uu_{t}\|_{L^{2}(\Omega)}^{2}+\|\nabla p\|_{L^{6}(\Omega)}+\|\sqrt{p}(\operatorname{div}\uu)\|_{L^{2}(\Omega)}^{2}\right)^{4}.
		\end{aligned}
	\end{equation}
	Setting $f(t)\triangleq 1+\|\sqrt{\rho}\uu_{t}\|_{L^{2}(\Omega)}^{2}+\|\nabla p\|_{L^{6}(\Omega)}+\|\sqrt{p}(\operatorname{div}\uu)\|_{L^{2}(\Omega)}^{2}$, \eqref{2ddt} can rewritten as
	\begin{equation*}
		f^\prime(t)\leqslant Cf^4(t).
	\end{equation*}
	So,
	$$f(t)\leqslant f(0)(1-3Ctf^3(0))^{-\frac{1}{3}}$$
	holds for any $t\in(0,\min\{\frac{1}{3Cf^3(0)},T^*\}),$ with $f(0)\triangleq 1+\|\sqrt{\rho_{0}}(\uu_{0})_{t}\|_{L^{2}(\Omega)}^{2}+\|\nabla p_{0}\|_{L^{6}(\Omega)}+\|\sqrt{p_{0}}(\operatorname{div}\uu_{0})\|_{L^{2}(\Omega)}^{2}.$
	Thus, one integrates \eqref{u26} over $(0,T)$ to deduce that
	\begin{equation}\label{26}
		\int_0^T \|\uu\|_{W^{2,6}(\Omega)}\,dt\leqslant C
	\end{equation}
	holds for any $T\in(0,\min\{\frac{1}{3Cf^3(0)},T^*\}).$
	So, one gets that $\|\uu\|_{W^{2,6}(\Omega)}$ is almost everywhere finite on $[0,T]$ for any $T\in(0,\min\{\frac{1}{3Cf^3(0)},T^*\}).$
	The proof of Lemma \ref{L4.3} is completed.
\end{proof}

On the basis of the lemma \ref{L4.3},  taking $t_1\in(0,\min\{\frac{1}{3Cf^3(0)},T^*\}),$  one finds that                                                        	
$(\rho,\uu)(x,t_1)\triangleq\lim\limits_{t\rightarrow t_1}(\rho,\uu) $ satisfy the conditions imposed on the initial data \eqref{1dlg-E7}
at the time $t=t_1$. Furthermore,
$$-\mu\triangle \uu-(\lambda+\mu)\nabla\operatorname{div}\uu+\nabla p-\tau^{*}(( |{\mathbb D}{\mathbf u}|^2
+\delta^2)^{\frac{q-2}{2}}\operatorname{D}(\uu))\mid_{t=t_1}=\lim\limits_{t\rightarrow t_{1}}\rho^{\frac{1}{2}}(x,t_1)g(x),$$
with $g(x)\triangleq\lim\limits_{t\rightarrow t_1}\big(\rho^{\frac{1}{2}}(\uu_{t}+\uu\cdot\nabla \uu)\big)(x,t_1)\in L^{2}(\Omega)$. Thus, $(\rho,\uu)(x,t_1) $ satisfies \eqref{1dlg-E8} also.
Now, one takes $(\rho,\uu)(x,t_1) $ as the initial data and apply Lemma \ref{lem1} to extend the local strong solution beyond $T^{*}.$  This is a
contradiction and the proof of Theorem \ref{theo2} is completed.








\end{document}